\newtheorem{thm}{Theorem}[section]
\newtheorem{crl}[thm]{Corollary}
\newtheorem{lmm}[thm]{Lemma}
\newtheorem{prp}[thm]{Proposition}
\theoremstyle{definition}
\newtheorem{dfn}[thm]{Definition}
\theoremstyle{remark}
\newtheorem*{rem}{Remark}
\newcommand{\ai}{\operatorname{Ai}}
\newcommand{\ram}{\operatorname{A}}
\newcommand{\res}{\operatorname{Res}}
\begin{document}
\title{The Stokes phenomenon for the Ramanujan's  $q$-difference equation and its higher order extension}
\author{Takeshi MORITA\thanks{Graduate School of Information Science and Technology, Osaka University, 
1-1  Machikaneyama-machi, Toyonaka, 560-0043, Japan.} }
\date{}
\maketitle
\begin{abstract}      
We show connection formulae of local solutions of the Ramanujan equation between the origin and the infinity. These solutions are given by the Ramanujan function, the $q$-Airy function and the divergent basic hypergeometric series ${}_2\varphi_0(0,0;-;q,x)$. We use two different $q$-Borel-Laplace resummation methods to obtain our connection formulae. We also introduce the $q$-Borel-Laplace transformation of level $r-1$, which are higher order extension of these transformations. These methods are useful to obtain an asymptotic formula of a divergent series ${}_r\varphi_0(0,0,\dots ,0;-;q,x)$.
\end{abstract}

\section{Introduction}
In this papar, we deal with the Ramanujan equation
\begin{equation}
qxu(q^2x)-u(qx)+u(x)=0,\qquad \forall x\in\mathbb{C}^*.
\label{ramanujaneq}
\end{equation}
We assume that $0<|q|<1$. The Ramanujan equation \eqref{ramanujaneq} has solutions 
\begin{align}
u_1(x)&={}_0\varphi_1(-;0;q,-qx), \\
u_2(x)&=\theta_q(x){}_2\varphi_0\left(0,0;-;q,-\frac{x}{q}\right)\label{secondram}
\end{align}
around the origin. 
The equation \eqref{ramanujaneq} also has a fundamental system of solutions around the infinity \cite{IZ}:
\begin{align}
v_1(x)&=\frac{\theta_q(x)}{\theta_{q^2}(x)}{}_1\varphi_1\left(0;q;q^2,\frac{q^2}{x}\right)\\
v_2(x)&=\frac{q}{q-1}\frac{\theta_q(x/q)}{\theta_{q^2}(x/q)}\frac{1}{x}{}_1\varphi_1\left(0;q^3;q^2,\frac{q^3}{x}\right).
\end{align}
Here, the function ${}_r\varphi_s(a_1,\dots ,a_r;b_1,\dots ,b_s;q,x)$ is the basic hypergeometric series with the base $q$ \cite{GR}:
\begin{align*}
{}_r\varphi_s(a_1,\dots ,a_r&;b_1,\dots ,b_s;q,x)\\
&:=\sum_{n\ge 0}\frac{(a_1,\dots ,a_r;q)_n}{(b_1,\dots ,b_s;q)_n(q;q)_n}\left\{(-1)^nq^{\frac{n(n-1)}{2}}\right\}^{1+s-r}x^n.
\end{align*}
The notation $(a;q)_n$ is the $q$-shifted factorial;
\[(a;q)_n:=
\begin{cases}
1, &n=0, \\
(1-a)(1-aq)\dots (1-aq^{n-1}), &n\ge 1,
\end{cases}
\]
moreover, $(a;q)_\infty :=\lim_{n\to \infty}(a;q)_n$ and 
\[(a_1,a_2,\dots ,a_m;q)_\infty:=(a_1;q)_\infty (a_2;q)_\infty \dots (a_m;q)_\infty.\]
The basic hypergeometric series has radius of convergence $\infty , 1$ or $0$ according to whether $r-s<1, r-s=1$ or $r-s>1$. We remark that the solution $u_2(x)$ contains a divergent series and other solutions $u_1(x)$, $v_1(x)$ and $v_2(x)$ are convergent series. We study the relation between these solutions from the viewpoint of connection problems on linear $q$-difference equations. The function $\theta_q(x)$ is the theta function of Jacobi with the base $q$ (see section two for more details).

\medskip
The solution $u_1(x)$ is called the Ramanujan function, which has found by Ramanujan \cite{Ramanujan}.  
M.~E.~H.~Ismail \cite{IZ} has introduced the notation 
\[\ram_q(x):=\sum_{n\ge 0}\frac{q^{n^2}}{(q;q)_n}(-x)^n= {}_0\varphi_1(-;0;q,-qx).\]
He shows that $\ram_q(x)$ is one of $q$-analogues of the Airy function \cite{Is}.  The Ramanujan function 
$\ram_q(x)$ appears in the third identity on p.$57$ of Ramanujan's ``Lost notebook'' \cite{Ramanujan} as follows (with $x$ replaced by $q$):
\[\ram_q(-a)=\sum_{n\ge 0}\frac{a^nq^{n^2}}{(q;q)_n}=
\prod_{n\ge 1}\left(1+\frac{aq^{2n-1}}{1-q^ny_1-q^{2n}y_2-q^{3n}y_3-\cdots }\right)\]
where
\begin{align*}
y_1&=\frac{1}{(1-q)\psi^2(q)},\\
y_2&=0,\\
y_3&=\frac{q+q^3}{(1-q)(1-q^2)(1-q^3)\psi^2(q)}
-\frac{\sum_{n\ge 0}\frac{(2n+1)q^{2n+1}}{1-q^{2n+1}}}{(1-q)^3\psi^6(q)},\\
y_4&=y_1y_3,\\
\psi (q)&=\sum_{n\ge 0}q^{\frac{n(n+1)}{2}}=\frac{(q^2;q^2)_\infty}{(q;q^2)_\infty}.\\
\end{align*}
Strictly speaking, Ramanujan has not shown that the Ramanujan function $\ram_q(x)$ satisfies the equation \eqref{ramanujaneq}. But we propose that the equation \eqref{ramanujaneq} is named ``the Ramanujan equation'' after his study.

We review some $q$-special functions and connection formulae of these functions. Recently, Y.~Ohyama \cite{Ohyama} shows that there exists ``the digeneration diagram'' for Heine's series ${}_2\varphi_1(a,b;c;q,x)$:
\begin{equation}
	\begin{picture}(370,70)(0,0)
        \put(20,30){{  $ _2 \varphi_1(a,b;c;z)$ } }
        \put(110,30){{ $q$-confluent   }}
        \put(200,5){{ $ _1 \varphi_1(a;0;z)$   }}
        \put(205,55){{ $J_\nu^{(3)}$  }}
        \put(205,30){{$J_\nu^{(1)},J_{\nu}^{(2)} $}}
        \put(270,55){{  $q$-Airy}}
        \put(270,18){{ Ramanujan}}
        \put(90,32){\vector(1,0){20}}
        \put(170,36){\vector(3,2){27}}
        \put(170,32){\vector(1,0){27}}
        \put(230,55){\vector(1,0){27}}
        \put(170,28){\vector(3,-2){27}}
        \put(260,8){\vector(2,1){12}}
        \put(244,32){\vector(3,-1){27}}
     \end{picture}
\label{qdegenerate}
\end{equation}

We remark that there exist three different types of $q$-Bessel functions $J_\nu^{(j)},j=1,2,3$ \cite{GR} and two $q$-analogues of the Airy function. These $q$-analogue are solutions of different types of linear $q$-difference equations. In this point, this diagram is essentially different from the differential case:
\begin{equation*}
	\begin{picture}(370,70)(0,0)
        \put(35,30){{  Gauss } }
        \put(120,30){{  Kummer }}
        \put(205,0){{  Bessel}}
        \put(205,65){{  Weber}}
        \put(270,30){{  Airy}}
        \put(80,32){\vector(1,0){38}}
        \put(180,36){\vector(2,1){38}}
        \put(180,30){\vector(2,-1){38}}
        \put(225,10){\vector(2,1){38}}
        \put(225,56){\vector(2,-1){38}}
     \end{picture}\label{digconti}
\end{equation*}

The Ramanujan function is the most degenerated case in the diagram \eqref{qdegenerate}.
We study connection problems on linear $q$-difference equations between around the origin and around the infinity with irregular singular points. The irregularity of $q$-difference equations and $q$-difference modules for some cases of slopes of the Newton polygons are studied by J.-P.~Ramis, J.~Sauloy and C.~Zhang \cite{RSZ}. Graphically, these (formal) invariants are the height of the right part of the Newton polygon, from the bottom to the upper right end. 
In this paper, we show connection formulae for the Ramanujan equation. Connection problems on linear $q$-difference equations between the origin and the infinity are studied by G.~D.~Birkhoff \cite{Birkhoff}. 

Connection formulae of second order linear $q$-difference equations are linear relations in a matrix form:
\[\begin{pmatrix}
u_1(x)\\
u_2(x)
\end{pmatrix}
=
\begin{pmatrix}
C_{11}(x)&C_{12}(x)\\
C_{21}(x)&C_{22}(x)
\end{pmatrix}
\begin{pmatrix}
v_1(x)\\
v_2(x)
\end{pmatrix}.\]
Here, $u_1(x)$ and $u_2(x)$ are solutions around the origin, $v_1(x)$ and $v_2(x)$ are solutions around infinity and $C_{ij}$ $(1\le i,j\le 2)$ are doubly periodic functions such that 
\[\sigma_qC_{ij}(x)=C_{ij}(x),\quad C_{ij}(e^{2\pi i}x)=C_{ij}(x),\]
namely, $q$-elliptic functions. 
The first example of the connection formula with regular singular points was found by G.~N.~Watson \cite{W} in 1912: 
\begin{align}\label{wato}
{}_2 \varphi_1\left(a,b;c;q;x \right)= 
\frac{(b,c/a;q)_\infty (a x,q/ a x;q)_\infty }{(c, b/a;q)_\infty (  x,q/   x;q)_\infty } 
{}_2 \varphi_1\left(a,aq/c;aq/b;q;cq/abx \right) \nonumber \\+ 
\frac{(a,c/b;q)_\infty (b x, q/ b x;q)_\infty }{(c, a/b;q)_\infty (  x,q/   x;q)_\infty } 
{}_2 \varphi_1\left(b, bq/c; bq/a; q; cq/abx \right).\notag
\end{align}
But connection formulae for irregular singular case had not found for a long time. Recently, C.~Zhang gives connection formulae for some confluent type basic hypergeometric series \cite{Z0,Z1,Z2}. In \cite{Z1}, Zhang gives a connection matrix of Jackson's first and second $q$-Bessel function $J_\nu^{(j)}(x;q), (j=1,2)$ \cite{GR} with using the $q$-Borel-Laplace transformations of the second kind. Zhang also gives the connection formula for the divergent seties ${}_2\varphi_0(a,b;-;q,x)$ in \cite{Z0,Z2} with using the $q$-Borel-Laplace transformations of the first kind. These resummation method are powerful tools for connection problems with irregular singular points. We define these transformations later. 

The connection formula for the solution \eqref{secondram}, i.e., $u_2(x)$ has not known. We remark that the basic hypergeometric part ${}_2\varphi_0(0,0;-;q,-x/q)$ in this solution is a divergent series around the origin and the $q$-Stokes phenomenon occurs \cite{sauloy}. Therefore, we need a suitale resummation method, which is the $q$-Borel-Laplace transformations of the first kind. By this resummation method, we obtain the following connection formula in the matrix form exactly and solve the connection problem on the Ramanujan equation.

\bigskip
\noindent
\textbf{Theorem.} \textit{For any $x\in\mathbb{C}^*\setminus [-\lambda ;q]$, we have}
\[\begin{pmatrix}
u_1(x)\\
\tilde{u}_2(x,\lambda )
\end{pmatrix}
=
\begin{pmatrix}
C_{11}(x)&C_{12}(x)\\
\tilde{C}_{21}(x)&\tilde{C}_{22}(x)
\end{pmatrix}
\begin{pmatrix}
v_1(x)\\
v_2(x)
\end{pmatrix},\]
\textit{provided that}
\[C_{11}(x)=\frac{\theta_{q^2}(qx)\theta_{q^2}(x)}{(q,q^2;q^2)_\infty\theta_q(x)}, \quad 
C_{12}(x)=\frac{\theta_{q^2}(x)\theta_{q^2}(x/q)}{(q,q^2;q^2)_\infty\theta_q(x/q)},\]
\[\tilde{C}_{21}(x)=\frac{(q;q)_\infty\theta_{q^2}(-qx/\lambda^2)\theta_{q^2}(x)}{\theta_q(-q/\lambda )\theta_q(x/\lambda )\theta_q(x)}\]
\textit{and}
\[\tilde{C}_{22}(x)=\frac{(q;q)_\infty\theta_{q^2}(-x/\lambda^2)\theta_{q^2}(x/q)}{\theta_q(-1/\lambda )\theta_q(x/\lambda )\theta_q(x/q)}.\]
\textit{Here, the function $\tilde{u}_2(x,\lambda )$ is}
\[\tilde{u}_2(x,\lambda )=\theta_q(x){}_2f_0(0,0;-;q,\lambda ,-x/q)\]
\textit{which is meromorphic function on $\mathbb{C}^*\setminus [-\lambda ;q]$ and the set $[\lambda ;q]$ is the $q$-spiral(see section two). We denote ${}_2f_0(0,0;-;q,\lambda ,-x/q)$ as the resummation of ${}_2\varphi_0(0,0;-;q,-x/q)$.}

\bigskip 
We show this formula with the using of the $q$-Borel-Laplace method of the first kind. The connection formula between the Ramanujan function and the $q$-Airy function has given with the using of the $q$-Borel-Laplace method of the second kind. 

\bigskip
\noindent
\textbf{Theorem }(Morita\cite{M0}) \textit{For any }$x\in\mathbb{C}^*$, we have
\begin{equation}
\ram_{q^2}\left(-\frac{q^3}{x^2}\right)
=
\frac{1}{(q,-1;q)_\infty}\left\{\theta_q\left(\frac{x}{q}\right)\ai_q(-x)
+\theta_q \left(-\frac{x}{q}\right)\ai_q (x)\right\}.\label{ram-qairy}
\end{equation}

Here, the function $\ai_q(x )$ is the $q$-Airy function, which is another $q$-analogue of the Airy function.  The $q$-Airy function 
$\ai_q(x )$ is found  as a special solution   of the second $q$-Painlev\'e equation by  K.~Kajiwara, T.~Masuda, M.~Noumi, Y.~Ohta and Y.~Yamada 
\cite{hama,KMNOY} . The function $\ai_q(x)$ is defined by
\[\ai_q(x):=\sum_{n\ge 0}\frac{1}{(-q;q)_n(q;q)_n}\left\{(-1)^nq^{\frac{n(n-1)}{2}}\right\}(-x)^n.\]

These functions $\ram_q(x)$ and $\ai_q(x)$ are most degenerated case in the diagram for Heine's ${}_2\varphi_1(a,b;c;q,x)$ series \eqref{qdegenerate} and satisfy different two types of second order $q$-difference equations. The Ramanujan function $\ram_q(x)$ satisfies 
\[qxu(q^2x)-u(qx)+u(x)=0\]
and the $q$-Airy function $\ai_q(x)$ satisfies
\[u(q^2x)+xu(qx)-u(x)=0.\]

Ismail also has pointed out the Ramanujan function and the $q$-Airy function are different. But the relation between them has not known. 
Our connection formula \eqref{ram-qairy} shows that the Ramanujan function 
can be represented by the $q$-Airy functions.

We remark that the connection formula for $u_1(x)$ is essentially given by Ismail and C.~Zhang as follows \cite{IZ}:
\begin{align}
\ram_q(x)&=\frac{(qx,q/x;q^2)_\infty}{(q;q^2)_\infty}{}_1\varphi_1\left(0;q;q^2,\frac{q^2}{x}\right)\notag\\
&-\frac{q(q^2x,1/x;q^2)_\infty}{(1-q)(q;q^2)_\infty}{}_1\varphi_1\left(0;q^3;q^2,\frac{q^3}{x}\right).
\label{Is-Zh}
\end{align}
They give \eqref{Is-Zh} as an asymptotic formula for the Ramanujan function. But from the viewpoint of connection problems on $q$-difference equations, we can regard the formula \eqref{Is-Zh} as one of connection formulae of the Ramanujan function. In fact, we can rewrite this formula as follows:
\begin{align*}
u(x)=\frac{\theta_{q^2}(qx)\theta_{q^2}(x)}{(q,q^2;q^2)_\infty \theta_q(x)}v_1(x)+\frac{\theta_{q^2}(x)\theta_{q^2}(x/q)}{(q,q^2;q^2)_\infty \theta_q(x/q)}v_2(x).
\end{align*}
Here, connection coefficients are $q^2$-elliptic functions. 
In \cite{M0}, we derive Ismail-Zhang's formula \eqref{Is-Zh} from \eqref{ram-qairy} by suitable algebraic transformation.

In the last section, we introduce the $q$-Borel-Laplace transformations of level $r-1$. These transformations are higher order extension of the $q$-Borel-Laplace transformations. We also apply these new method to a divergent series ${}_r\varphi_0(0,0,\dots ,0;-;q,x)$ and give the asymptotic formula as follows:

\bigskip
\noindent
\textbf{Theorem.} For any $\mathbb{C}^*\setminus [-\lambda ;q^{r-1}]$, we have
\begin{align}&\left(\mathcal{L}_{q^{r-1,\lambda}}^+\circ \mathcal{B}_{q^{r-1}}^+{}_r\varphi_0(0,0,\dots ,0;-;q,x)\right)\notag\\
&=\frac{1}{\theta_{q^{r-1}}\left(\frac{\lambda}{x}\right)}\frac{(q;q)_\infty}{\theta_q(-\lambda )}\theta_{q^{r(r-1)}\left((-1)^{r-1}q^{\frac{(r-1)(r-2)}{2}}\left(\frac{\lambda}{x}\right)^r\right)}\notag\\
&\times{}_{r-1}\varphi_{r-1}\left(0,0,\dots ,0;q,q^2,\dots ,q^{r-1};q^r,q^{\frac{(r-1)(r-2)}{2}}/x\right)\notag\\
&+\dots \notag\\
&+\frac{1}{\theta_{q^{r-1}}\left(\frac{\lambda}{x}\right)}\frac{(q;q)_\infty}{\theta_q(-\lambda )}\theta_{q^{r(r-1)}\left((-1)^{r-1}q^{\frac{3r(r-1)}{2}}\left(\frac{\lambda}{x}\right)^r\right)}\notag\\
&\times \frac{(-1)^{r-1}q^{\frac{(r-1)(r-2)}{2}}}{(1-q)(1-q^2)\dots (1-q^{r-1})}\left(\frac{q}{\lambda}\right)^{r-1}\notag\\
&\times{}_{r-1}\varphi_{r-1}\left(0,0,\dots ,0;q^r,q^{r+1},\dots ,q^{2r-1};q^r,q^{\frac{3r(r-1)}{2}}/x\right)\notag
\end{align}

\section{Basic notations}
In this section, we fix our notations. We assume that $q\in\mathbb{C}^*$ satisfies $0<|q|<1$. The $q$-shifted operator $\sigma_q$ is given by $\sigma_qf(x)=f(qx)$. For any fixed $\lambda\in\mathbb{C}^*\setminus q^{\mathbb{Z}}$, the set $[\lambda ;q]$-spiral is $[\lambda ;q]:=\lambda q^{\mathbb{Z}}=\{\lambda q^k;k\in\mathbb{Z}\}$. 
The theta function of Jacobi is important in connection problems. The theta function of Jacobi with the base $q$ is
\[\theta_q(x):=\sum_{n\in\mathbb{Z}}q^{\frac{n(n-1)}{2}}x^n,\qquad \forall x\in\mathbb{C}^*.\]
The theta function has the following properties;
\begin{enumerate}
\item Jacobi's triple product identity is
\[\theta_q(x)=\left(q,-x,-\frac{q}{x};q\right)_\infty .\]
\item The $q$-difference equation which the theta function satisfies;
\[\theta_q(q^kx)=q^{-\frac{n(n-1)}{2}}x^{-k}\theta_q(x),\quad \forall k\in\mathbb{Z}.\]
\item The inversion formula;
\[\theta_q\left(\frac{1}{x}\right)=\frac{1}{x}\theta_q(x).\]
\end{enumerate}
We remark that the function $\theta (-\lambda x)/\theta (\lambda x)$, $\forall\lambda\in\mathbb{C}^*$ satisfies a $q$-difference equation
\[u(qx)=-u(x)\]
which is also satisfied by the function $u(x)=e^{\pi i\left(\frac{\log x}{\log q}\right)}$.

\section{Two $q$-exponential functions}
We review two different $q$-exponential functions $e_q(x)$ and $E_q(x)$ to consider the connection problem on the Ramanujan equation. In this section, we show the relation between them. We review two different $q$-exponential functions from the viewpoint of the connection problems. One of the $q$-exponential function $e_q(x)$ is given by 
\[e_q(x):={}_1\varphi_0 (0;-;q,x)=\sum_{n\ge 0}\frac{x^n}{(q;q)_n}.\]
The other $q$-exponential function $E_q(x)$ is 
\[E_q(x):={}_0\varphi_0(-;-;q,-x)=\sum_{n\ge 0}\frac{q^{\frac{n(n-1)}{2}}}{(q;q)_n}x^n.\]
The function $e_q(x)$ satisfies the following first order $q$-difference equation
\[\left\{\sigma_q-(1-x)\right\}u(x)=0\]
and $E_q(x)$ satisfies 
\[\left\{(1+x)\sigma_q-1\right\}u(x)=0.\]
The limit $q\to 1-0$ converges the exponential function
\[\lim_{q\to 1-0}e_q\left(x(1-q)\right)=\lim_{q\to 1-0}E_q\left(x(1-q)\right)=e^x.\]
In this sense, these functions considered as $q$-analogues of the exponential function. It is known that there exists the relation between these functions \cite{GR}:
\[e_q(x)E_q(-x)=1,\quad e_{q^{-1}}(x)=E_q(-qx).\]
But another relation has not known. We show the connection formula between them and give alternate representation of $e_q(x)$.

At first, we show the following connection formula between $e_q(x)$ and $E_q(x)$. 
\begin{thm}\label{eqEq} For any $x\in\mathbb{C}^*\setminus [1;q]$, 
\[e_q(x)=\frac{(q;q)_\infty}{\theta_q(-x)}E_q\left(-\frac{q}{x}\right)\]
where $|x|<1$.
\end{thm}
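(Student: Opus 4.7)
The cleanest route is through Euler's two classical product expansions of the $q$-exponential functions, combined with the Jacobi triple product identity already stated in Section~2. The key preliminary facts I would recall (or quickly re-derive) are
\[
e_q(x)=\frac{1}{(x;q)_\infty},\qquad E_q(x)=(-x;q)_\infty,
\]
both of which follow from the $q$-binomial theorem ${}_1\varphi_0(a;-;q,x)=(ax;q)_\infty/(x;q)_\infty$ by specializing $a=0$ and by letting $a\to\infty$ respectively. As an alternative verification, each identity can be checked by observing that both sides satisfy the same first-order $q$-difference equation listed just before Theorem~\ref{eqEq}, together with the normalization at $x=0$, so they must coincide on the disc of convergence.

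With these product formulas in hand, I would substitute $x\mapsto -q/x$ in the expansion of $E_q$ to get
\[
E_q\!\left(-\tfrac{q}{x}\right)=\left(\tfrac{q}{x};q\right)_\infty,
\]
and use Jacobi's triple product $\theta_q(-x)=(q,x,q/x;q)_\infty$ to rewrite
\[
\frac{(q;q)_\infty}{\theta_q(-x)}=\frac{1}{(x;q)_\infty\,(q/x;q)_\infty}.
\]
Multiplying these two expressions, the factor $(q/x;q)_\infty$ cancels and the right-hand side of the claimed identity collapses to $1/(x;q)_\infty=e_q(x)$. This is a purely algebraic manipulation once the two Euler product formulas are granted.

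The only subtle point, and the step I would be most careful about, is the domain of validity. The series defining $e_q(x)$ converges only for $|x|<1$, whereas the right-hand side is meromorphic on $\mathbb{C}^*$ with poles exactly on the $q$-spiral $[1;q]=q^{\mathbb{Z}}$ coming from the zeros of $\theta_q(-x)$. Hence the identity established on the disc $|x|<1$ should be read as providing the natural meromorphic continuation of $e_q(x)$ to $\mathbb{C}^*\setminus [1;q]$, and the excluded set in the statement of the theorem is precisely the polar locus. I expect no serious obstacle beyond tracking the zero sets of the theta function and the infinite products correctly.
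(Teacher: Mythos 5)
Your proposal is correct and follows essentially the same route as the paper: both arguments rest on the Euler product representations $e_q(x)=1/(x;q)_\infty$ and $E_q(x)=(-x;q)_\infty$ combined with the Jacobi triple product form of $\theta_q(-x)$, after which the identity is immediate. Your added remarks on the domain of validity and the meromorphic continuation off the $q$-spiral are a welcome precision that the paper leaves implicit.
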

\begin{proof}The function $e_q(x)$ and $E_q(x)$ have infinite product as follows:
\[e_q(x)=\frac{1}{(x;q)_\infty},\qquad |x|<1\]
and
\[E_q(x)=(-x;q)_\infty .\]
We remark that $e_q(x)$ can be described as
\[e_q(x)=\frac{1}{\theta_q(-x)}\left(q,\frac{q}{x};q\right)_\infty =\frac{(q;q)_\infty}{\theta_q(-x)}E_q\left(-\frac{q}{x}\right)\]
where $|x|<1$. We obtain the conclusion.
\end{proof}
Therefore, these $q$-exponential functions are related by the connection formula between the origin and the infinity. If we replace $x$ by $x/q$, we obtain the following lemma. This is useful to consider the connection problem in the last section. 
\begin{lmm}\label{alt}For any $x\in\mathbb{C}^*\setminus [1;q]$, the function $e_q(x/q)$ has the following alternate representation.
\begin{align*}
e_q\left(\frac{x}{q}\right)=
\frac{(q;q)_\infty}{\theta_q\left(-\frac{x}{q}\right)}{}_0\varphi_1\left(-;q;q^2,\frac{q^5}{x^2}\right)
-\frac{(q;q)_\infty}{\theta_q\left(-\frac{x}{q}\right)}\frac{q^2}{(1-q)x}{}_0\varphi_1\left(-;q^3;q^2,\frac{q^7}{x^2}\right).
\end{align*}
\end{lmm}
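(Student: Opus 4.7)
The plan is to apply Theorem~\ref{eqEq} with $x$ replaced by $x/q$. Observing that the $q$-spiral $[1;q]=q^{\mathbb{Z}}$ is invariant under the rescaling $x\mapsto x/q$, the substitution is licit for $x\in\mathbb{C}^*\setminus[1;q]$ and yields immediately
\[
e_q\!\left(\frac{x}{q}\right)
=\frac{(q;q)_\infty}{\theta_q\!\left(-x/q\right)}\,E_q\!\left(-\frac{q^2}{x}\right).
\]
The lemma therefore reduces to establishing the purely series-theoretic identity
\[
E_q\!\left(-\frac{q^2}{x}\right)
={}_0\varphi_1\!\left(-;q;q^2,\frac{q^5}{x^2}\right)
-\frac{q^2}{(1-q)x}\,{}_0\varphi_1\!\left(-;q^3;q^2,\frac{q^7}{x^2}\right).
\]

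To prove this, I would expand $E_q(-q^2/x)=\sum_{n\ge 0}(-1)^n q^{n(n+3)/2}(q;q)_n^{-1}x^{-n}$ directly from its definition and split the sum by the parity of $n$. Putting $n=2m$ produces a series whose general term is $q^{2m^2+3m}(q;q)_{2m}^{-1}x^{-2m}$; putting $n=2m+1$ produces $-q^{2m^2+5m+2}(q;q)_{2m+1}^{-1}x^{-(2m+1)}$, from which it is natural to pull out the factor $-q^2/((1-q)x)$.

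The core algebraic step is the decomposition of the $q$-shifted factorials according to parity,
\[
(q;q)_{2m}=(q;q^2)_m(q^2;q^2)_m,\qquad
(q;q)_{2m+1}=(1-q)(q^2;q^2)_m(q^3;q^2)_m,
\]
combined with the definition of ${}_0\varphi_1$ on base $q^2$,
\[
{}_0\varphi_1\!\left(-;b;q^2,y\right)
=\sum_{m\ge 0}\frac{q^{2m(m-1)}}{(b;q^2)_m(q^2;q^2)_m}\,y^m.
\]
Taking $b=q$, $y=q^5/x^2$ matches the even half term by term (the exponent bookkeeping reads $2m(m-1)+5m=2m^2+3m$), and taking $b=q^3$, $y=q^7/x^2$ matches the odd half after the factor $-q^2/((1-q)x)$ has been extracted (with exponent check $2m(m-1)+7m=2m^2+5m$).

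I do not anticipate any real obstacle: the argument is essentially a bookkeeping split of the $E_q$-series into even/odd parts, glued back together via the standard parity factorization of $(q;q)_n$. The only subtlety is keeping track of the $\{(-1)^m q^{m(m-1)}\}^{1+s-r}=q^{2m(m-1)}$ factor in the ${}_0\varphi_1$ convention with base $q^2$, which is what forces the shifts $q^5/x^2$ and $q^7/x^2$ in the argument rather than something simpler like $q^2/x^2$.
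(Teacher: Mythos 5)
Your proposal is correct and follows essentially the same route as the paper: apply Theorem~\ref{eqEq} with $x$ replaced by $x/q$, expand $E_q(-q^2/x)$ as a series, and split it into even and odd parts using the parity factorization $(a;q)_{2k}=(a,aq;q^2)_k$, matching each half with a ${}_0\varphi_1$ series on base $q^2$. The exponent bookkeeping in your even/odd split checks out, so nothing further is needed.
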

\begin{proof}From theorem \ref{eqEq}, 
\[{}_1\varphi_0\left(0;-;q,\frac{x}{q}\right)=\frac{(q;q)_\infty}{\theta_q\left(-\frac{x}{q}\right)}E_q\left(-\frac{q^2}{x}\right)=\frac{(q;q)_\infty}{\theta_q\left(-\frac{x}{q}\right)}{}_0\varphi_0\left(-;-;q,\frac{q^2}{x}\right).\]
Here,
\[{}_0\varphi_0\left(-;-;q,\frac{q^2}{x}\right)
=\sum_{k\ge 0}\frac{1}{(q;q)_k}(-1)^kq^{\frac{k(k-1)}{2}}\left(\frac{q^2}{x}\right)^k\]
and we remark that $(a;q)_{2k}=(a,aq;q^2)_k$ \cite{GR}. By separating the terms with even and odd $k\ge 0$, we obtain the conclusion.
\end{proof}
By separating the terms with $r$-th order terms, we have the following corollary.
\begin{crl}\label{cor33}For any $x\in\mathbb{C}^*\setminus [1;q]$, we have
\begin{align}e_q(x)&=\frac{(q;q)_\infty}{\theta_q(-x)}
\left[{}_0\varphi_{r-1}\left(-;q,q^2,\dots ,q^{r-1};q^r,(q^r)^{\frac{r-1}{2}}\left(\frac{q}{x}\right)^r\right)\right.\notag\\
&+\frac{(-1)}{1-q}\frac{q}{x}{}_0\varphi_{r-1}\left(-;q^2,\dots ,q^{r-1},q^{r+1};q^r,(q^r)^{\frac{r+1}{2}}\left(\frac{q}{x}\right)^r\right)\notag\\
&+\dots +\frac{(-1)^{r-1}q^{\frac{(r-1)(r-2)}{2}}}{(1-q)(1-q^2)\dots (1-q^{r-1})}\left(\frac{q}{x}\right)^{r-1}\notag\\
&\left.{}_0\varphi_{r-1}\left(-;q^{r+1},\dots ,q^{2(r-1)};q^r,(q^r)^{\frac{3(r-1)}{2}}\left(\frac{q}{x}\right)^r\right)
\right].\notag
\end{align}
\end{crl}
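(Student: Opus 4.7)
The plan is to generalize the proof of Lemma~\ref{alt}, which is the $r=2$ case, by splitting the summation index modulo $r$ rather than just by parity. Starting from Theorem~\ref{eqEq}, I would write
\[e_q(x) = \frac{(q;q)_\infty}{\theta_q(-x)} E_q\!\left(-\frac{q}{x}\right) = \frac{(q;q)_\infty}{\theta_q(-x)} \sum_{n\ge 0} \frac{(-1)^n q^{\binom{n}{2}}}{(q;q)_n}\left(\frac{q}{x}\right)^n,\]
and then partition the sum according to the residue $j \in \{0,1,\ldots,r-1\}$ of $n$ modulo $r$, writing $n=rk+j$ with $k\ge 0$.

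For each residue class two routine identities carry the calculation. First, the $q$-shifted factorial splits as
\[(q;q)_{rk+j} = (q;q)_j \prod_{s=1}^{r} (q^{j+s};q^r)_k,\]
and exactly one of the $r$ factors on the right is $(q^r;q^r)_k$ (corresponding to $s=r-j$, or $s=r$ when $j=0$); this factor plays the role of the standard $(q^r;q^r)_k$ denominator inside ${}_0\varphi_{r-1}$, while the remaining $r-1$ factors supply its lower parameters $b_1,\ldots,b_{r-1}$. Second, the quadratic exponent satisfies
\[\binom{rk+j}{2} = r^2\binom{k}{2} + rk\,\frac{r-1+2j}{2} + \binom{j}{2},\]
so that $(-1)^{rk+j} q^{\binom{rk+j}{2}} = (-1)^j q^{\binom{j}{2}} \bigl[(-1)^k q^{r\binom{k}{2}}\bigr]^{r} q^{rk(r-1+2j)/2}$, in which the bracketed power is precisely the factor $[(-1)^k q^{r\binom{k}{2}}]^{1+(r-1)-0}$ required in a base-$q^r$ series ${}_0\varphi_{r-1}$. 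Substituting both identities and pulling the $j$-dependent constants out of the inner sum, each residue class $j$ contributes
\[\frac{(-1)^j q^{\binom{j}{2}}}{(q;q)_j}\left(\frac{q}{x}\right)^{j} {}_0\varphi_{r-1}\!\left(-;\, q^{j+1},\ldots,\widehat{q^r},\ldots,q^{j+r};\, q^r,\, (q^r)^{(r-1+2j)/2}(q/x)^r\right),\]
which agrees term-by-term with the $j$-th summand on the right-hand side of the corollary.

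The argument is essentially a bookkeeping exercise once the two identities above are in hand; the main obstacle is purely notational, namely tracking which of the parameters $q^{j+1},\ldots,q^{j+r}$ is absorbed into $(q^r;q^r)_k$ for each $j$, and verifying that the stated arguments $(q^r)^{(r-1)/2}(q/x)^r$ and $(q^r)^{3(r-1)/2}(q/x)^r$ match the extreme cases $j=0$ and $j=r-1$. Summing the $r$ contributions and multiplying through by $(q;q)_\infty/\theta_q(-x)$ then yields the claimed formula.
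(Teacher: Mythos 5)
Your proposal is correct and is essentially the paper's own argument: the paper obtains Corollary~\ref{cor33} by applying Theorem~\ref{eqEq} and splitting the resulting $E_q(-q/x)$ series into residue classes modulo $r$ (the phrase ``separating the terms with $r$-th order terms''), exactly as you do, with your two identities for $(q;q)_{rk+j}$ and $\binom{rk+j}{2}$ simply making explicit the bookkeeping the paper leaves implicit from the $r=2$ case of Lemma~\ref{alt}.
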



\section{Covering transformations}
We define a covering transformation for a second order linear $q$-difference equation.
\begin{dfn}For a $q$-difference equation
\begin{equation}\label{sh}
a(x)u(q^2x)+b(x)u(qx)+c(x)u(x)=0,
\end{equation}
we define the covering transformation as follows
\[t^2:=x,\quad v(t):=u(t^2),\quad p:=\sqrt{q}.\]
\end{dfn}
The covering transform of the equation \eqref{sh} is given by
\[a(t^2)v(p^2t)+b(t^2)v(pt)+c(t^2)v(t)=0.\]
By the covering transformation, the equation 
\[\left(K\cdot x\sigma_q^2-\sigma_q+1\right)u(x)=0\]
is transformed to
\begin{equation}
\left(K\cdot t^2\sigma_p^2-\sigma_p+1\right)v(t)=0,\label{kram}
\end{equation}
where $K$ is a fixed constant in $\mathbb{C}^*$.

\section{The Ramanujan function and the $q$-Airy function}

There are two different $q$-analogue of the Airy function. One is called the Ramanujan function which appears in \cite{Ramanujan}. Ismail \cite{Is} pointed out that the Ramanujan function can be considered as a $q$-analogue of the Airy function. The other one is called the $q$-Airy function which is obtained by K.~Kajiwara, T.~Masuda, M.~Noumi, Y.~Ohta and Y.~Yamada \cite{KMNOY}. In this section, we see the properties of these functions. We explain the reason why they are called $q$-analogues of the Airy function and we show $q$-difference equations which they satisfy.The Ramanujan function appears in Ramanujan's ``Lost notebook''~\cite{Ramanujan}.
Ismail has pointed out that the Ramanujan function can be considered as a $q$-analogue of the Airy function.
The Ramanujan function is defined by following convergent series;
\[\ram_q(x):=\sum_{n\ge 0}\frac{q^{n^2}}{(q;q)_n}(-x)^n
={_0\varphi_1}(-;0;q,-qx).\]

In the theory of ordinary differencial equations, the term Plancherel-Rotach asymptotics refers to asymptotics around the largest and smallest zeros. With $x=\sqrt{2n+1}-2^{\frac{1}{2}}3^{\frac{1}{3}}n^{\frac{1}{6}}t$ and for $t\in\mathbb{C}$, the Plancherel-Rotach asymptotic formula for Hermite polynomials $H_n(x)$ is
\begin{equation}
\lim_{n\to +\infty}\frac{e^{-\frac{x^2}{2}}}{3^{\frac{1}{3}}\pi^{-\frac{3}{4}}2^{\frac{n}{2}+\frac{1}{4}}\sqrt{n!}}H_n(x)=\ai (t). \label{pr}
\end{equation}
In \cite{Is}, Ismail shows the $q$-analogue of \eqref{pr};
\begin{prp}We have  
\[\lim_{n\to\infty}\frac{q^{n^2}}{t^n}h_n(\sinh\xi_n|q)=\ram_q\left(\frac{1}{t^2}\right)\]
where $e^{\xi_n}=tq^{-\frac{n}{2}}$.
\end{prp}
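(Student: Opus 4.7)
The strategy is to carry out a Plancherel--Rotach type asymptotic analysis directly from an explicit finite-sum expansion of $h_n(\sinh\xi|q)$ in powers of $e^\xi$, following the template of the classical analysis that produces the Airy function from Hermite polynomials.

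First, I would write the continuous $q$-Hermite polynomial as a binomial-type finite sum
\[
h_n(\sinh\xi \mid q) = \sum_{k=0}^n c_{n,k}(q)\, e^{(n-2k)\xi},
\]
where each coefficient $c_{n,k}(q)$ is a $q$-binomial coefficient $\binom{n}{k}_q = (q;q)_n/((q;q)_k(q;q)_{n-k})$ together with an explicit sign and power of $q$, read off from either the three-term recurrence $2x\,h_n = h_{n+1} + (1-q^n)h_{n-1}$ or the standard generating function for these polynomials. A short induction on $n$ produces the closed form and, in particular, determines the $q$-exponent attached to $\binom{n}{k}_q$ in each summand.

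Next, I would insert the Plancherel--Rotach scaling $e^{\xi_n} = tq^{-n/2}$, which turns each exponential $e^{(n-2k)\xi_n}$ into $t^{\,n-2k} q^{-n(n-2k)/2}$. After multiplying by the normalization $q^{n^2}/t^n$ and regrouping, the $k$-th summand takes the form $\binom{n}{k}_q (-1)^k q^{\varphi(n,k)} t^{-2k}$, and the prefactor $q^{n^2}/t^n$ is tuned precisely so that the $n$-dependent part of $\varphi(n,k)$ cancels against the quadratic contribution coming from $q^{-n(n-2k)/2}$, leaving the stable value $q^{k^2}$ in the limit. Combined with $\binom{n}{k}_q \to 1/(q;q)_k$ as $n \to \infty$ for fixed $k$, the termwise limit of the $k$-th summand is $(-1)^k q^{k^2}/((q;q)_k t^{2k})$, which is precisely the $k$-th coefficient of $\ram_q(1/t^2)$.

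Finally, I would justify passing the limit through the sum. Because the summation range depends on $n$, the cleanest tool is dominated convergence: extend each summand to all $k \ge 0$ (with zero beyond $n$) and exhibit a uniform bound of the form $|\text{summand}_{n,k}| \le C|q|^{\alpha k^2}|t|^{-2k}$ with constants $\alpha > 0$ and $C$ independent of $n$; since $|q|<1$, such a majorant is summable. The main obstacle, in my view, is establishing this uniform bound: the exponent of $q$ in the $k$-th summand is quadratic in both $n$ and $k$, and to dominate it independently of $n$ one must exploit precisely the same cancellation that yielded the pointwise limit, promoting it to an inequality valid for all $n \ge k$ (for instance by checking that the discrete function $n \mapsto \varphi(n,k) + n^2/2$ is bounded below in $n$ by a quadratic in $k$ alone). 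Once this uniform bound is secured, summing the pointwise limits over $k \ge 0$ yields $\ram_q(1/t^2)$, which is the asserted identity.
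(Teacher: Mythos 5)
First, a point of comparison: the paper itself offers no proof of this proposition --- it is quoted from Ismail \cite{Is} --- so your outline cannot be checked against an argument in the text. That said, your route (explicit finite-sum expansion of $h_n$ in powers of $e^{\xi}$, Plancherel--Rotach substitution, termwise limit justified by a dominated-convergence/Tannery argument) is exactly the natural and standard way this asymptotic is obtained, and in outline it works.

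Two corrections of substance, though. First, the recurrence you quote, $2x\,h_n=h_{n+1}+(1-q^n)h_{n-1}$, is that of the \emph{continuous} $q$-Hermite polynomials $H_n(x|q)$ (whose Plancherel--Rotach limit leads to the other $q$-Airy function, not $\ram_q$); the polynomials in the statement are the Ismail--Masson $q^{-1}$-Hermite polynomials, with $2x\,h_n=h_{n+1}+q^{-n}(1-q^n)h_{n-1}$ and explicit expansion
\[
h_n(\sinh\xi\mid q)=\sum_{k=0}^{n}\binom{n}{k}_{q}(-1)^{k}q^{k(k-n)}e^{(n-2k)\xi},
\qquad \binom{n}{k}_{q}=\frac{(q;q)_n}{(q;q)_k(q;q)_{n-k}}.
\]
Starting from the wrong recurrence you would not get the factor $q^{k(k-n)}$, and the limit would not be $\ram_q$. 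Second, your bookkeeping claim that the prefactor $q^{n^2}/t^n$ cancels the $n$-dependence should be rechecked: with the expansion above and $e^{\xi_n}=tq^{-n/2}$, the $q$-exponent in the $k$-th term is $k(k-n)-\tfrac{n(n-2k)}{2}=k^{2}-\tfrac{n^{2}}{2}$, so the cancellation is exact with the factor $q^{n^{2}/2}/t^{n}=e^{-n\xi_n}$, giving
\[
\frac{q^{n^{2}/2}}{t^{n}}\,h_n(\sinh\xi_n\mid q)=\sum_{k=0}^{n}\binom{n}{k}_{q}(-1)^{k}q^{k^{2}}t^{-2k}\longrightarrow \ram_q\!\left(\frac{1}{t^{2}}\right);
\]
with the printed factor $q^{n^{2}}$ the terms acquire an extra $q^{n^{2}/2}$ and the limit would be $0$. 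The statement as printed appears to carry Ismail's normalization in base $q^{2}$ (where one takes $e^{\xi_n}=tq^{-n}$ and $h_n(\cdot\mid q^{2})$, and $q^{n^{2}}$ is correct), so you must either prove the base-$q$ version with $q^{n^{2}/2}$ or restate in base $q^{2}$; your argument cannot rescue the prefactor $q^{n^{2}}$ together with $e^{\xi_n}=tq^{-n/2}$ and the standard $h_n(\cdot\mid q)$. Finally, the uniform bound you worry about is much easier than you anticipate: because the cancellation of $n$ in the exponent is exact for every $n$ and $k$ (not only in the limit), the $k$-th term is literally $\binom{n}{k}_{q}(-1)^{k}q^{k^{2}}t^{-2k}$, and since $|\binom{n}{k}_{q}|\le(-|q|;|q|)_\infty/(|q|;|q|)_\infty^{2}$ uniformly in $n,k$, the majorant $C|q|^{k^{2}}|t|^{-2k}$ is immediate and Tannery's theorem finishes the proof.
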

Here, $h_n(\cdot |q)$ is the $q$-Hermite polynomial. 
In this sense, we can deal with the Ramanujan function $\ram_q(x)$ as a $q$-analogue
of the Airy function. The Ramanujan function satisfies the following $q$-diference equation;
\begin{equation}
\left(qx\sigma_q^2-\sigma_q+1\right)u(x)=0.
\label{ram}
\end{equation}

\begin{rem}We remark that another solution of the equation \eqref{ram} is given by
\[u(x)=\theta (x){}_2\varphi_0(0,0;-;q,-x/q).\]
Here, 
\[{}_2\varphi_0\left(0,0;-;q,-\frac{x}{q}\right)=\sum_{n\ge 0}\frac{1}{(q;q)_n}\left\{(-1)^nq^{\frac{n(n-1)}{2}}\right\}^{-1}\left(-\frac{x}{q}\right)^n\]
is a divergent series.
\end{rem}

The $q$-Airy function is found by K.~Kajiwara, T.~Masuda, M.~Noumi, Y.~Ohta and Y.~Yamada \cite{KMNOY}, in their study of the $q$-Painlev\'e equations. This function is the special solution of the second $q$-Painlev\'e equations and given by the following series
\[\ai_q(x):=\sum_{n\ge 0}\frac{1}{(-q,q;q)_n}\left\{(-1)^nq^\frac{n(n-1)}{2}\right\}(-x)^n={}_1\varphi_1(0;-q;q,-x).\]

T. Hamamoto, K. Kajiwara, N. S. Witte \cite{hama} proved following asymptotic expansions;

\begin{prp}
With $q=e^{-\frac{\delta^3}{2}}$, $x=-2ie^{-\frac{s}{2}\delta^2}$ as $\delta\to 0$,
\[{_1\varphi_1}(0;-q;q,-qx)=2\pi^{\frac{1}{2}}\delta^{-\frac{1}{2}}
e^{-\left(\frac{\pi i}{\delta^3}\right)\ln 2+\left(\frac{\pi i}{2\delta }\right)s+\frac{\pi i}{12}}\left[\ai\left(se^{\frac{\pi i}{3}}\right)+O(\delta^2)\right],\]
\[{_1\varphi_1}(0;-q;q,qx)=2\pi^{\frac{1}{2}}\delta^{-\frac{1}{2}}
e^{-\left(\frac{\pi i}{\delta^3}\right)\ln 2-\left(\frac{\pi i}{2\delta }\right)s-\frac{\pi i}{12}}\left[\ai\left(se^{-\frac{\pi i}{3}}\right)+O(\delta^2)\right]\]
for $s$ in any compact domain of $\mathbb{C}$.  
\end{prp}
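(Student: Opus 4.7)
The plan is to convert ${}_1\varphi_1(0;-q;q,\mp qx)$ into a Mellin--Barnes-type contour integral and perform a saddle-point analysis under the confluent scaling $q=e^{-\delta^3/2}$, $x=-2ie^{-s\delta^2/2}$, arranging for the exponent of the integrand to degenerate to the cubic Airy form $u^3/3-\zeta u$ as $\delta\to 0$. Since $q\to 1$, the central analytic tool is the Jacobi imaginary transformation of the $q$-Pochhammer symbols and of $\theta_q$, which trades $q$ for the exponentially small dual base $\tilde q=e^{-8\pi^2/\delta^3}$ and produces both the algebraic prefactor $\delta^{-1/2}$ and the divergent phases appearing in the statement.

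First I would rewrite the series by Watson's contour-integral technique (picking up the residues of $\pi/\sin\pi n$) as
\[{}_1\varphi_1(0;-q;q,\mp qx)=\frac{1}{2\pi i\,(q,-q;q)_\infty}\int_C\frac{\pi\,q^{n(n+1)/2}(\pm x)^n\,(q^{n+1},-q^{n+1};q)_\infty}{\sin(\pi n)}\,dn,\]
where $C$ is a vertical contour separating $n\in\mathbb Z_{\ge 0}$ from $n\in\mathbb Z_{<0}$. Setting $\epsilon=\delta^3/2$ and rescaling $n=2u/\delta$, the Jacobi imaginary transformation yields
\[\log(q;q)_\infty=-\frac{\pi^2}{6\epsilon}+\tfrac12\log\frac{\epsilon}{2\pi}+o(1),\quad \log(-q;q)_\infty=\frac{\pi^2}{12\epsilon}-\tfrac12\log 2+o(1),\]
together with analogous expansions for the $n$-dependent Pochhammer in the numerator. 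Combining these with $x^n=e^{n[\log 2-\pi i/2]-sn\delta^2/2}$ and expanding in powers of $\delta$, the dominant part of the exponent organizes itself as $u^3/3-s e^{\pm\pi i/3}u$ plus explicitly $\delta$-divergent constants; the rotation $e^{\pm\pi i/3}$ enters because $(-2i)^{1/3}=2^{1/3}e^{\mp\pi i/6}$ is the natural cube-root rescaling that turns the quadratic-in-$n$ piece into a cubic after the saddle is shifted.

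Leading-order steepest descent then collapses the rescaled integral to $\frac{1}{2\pi i}\int e^{u^3/3-\zeta u}\,du=\ai(\zeta)$ with $\zeta=se^{\pm\pi i/3}$, after deforming $C$ to the classical Airy contour in the $u$-plane. The algebraic factor $2\pi^{1/2}\delta^{-1/2}$ is the Jacobian of the rescaling combined with $\sqrt{\epsilon/(2\pi)}$ from the modular transformation of $(q;q)_\infty$. The phase $e^{-(\pi i/\delta^3)\ln 2}$ is what survives after the real divergent $\pi^2/\delta^3$ contributions from $(q;q)_\infty$ and $(-q;q)_\infty$ cancel against the saddle-point value of $n\log(-2i)$; the phase $e^{(\pi i/(2\delta))s}$ is the subleading contribution of the same expansion; and the constant $e^{\pi i/12}$ is the $24$-th-root-of-unity multiplier of the Dedekind $\eta$ under $\tau\mapsto-1/\tau$ evaluated at $\tau=i\delta^3/(4\pi)$. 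The companion formula with $+qx$ follows either by sending $x\mapsto -x$ (which swaps $e^{\pi i/3}$ with $e^{-\pi i/3}$) or by repeating the analysis with the saddle on the other branch of the cube root.

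The main obstacle is the bookkeeping of the three phase factors $(\pi i\ln 2)/\delta^3$, $\pi i s/(2\delta)$, and $\pi i/12$: each arises from a distinct piece of the modular transformation (respectively the $(-q;q)_\infty$ factor, the linear Taylor correction to $q^{n(n+1)/2}$ at the saddle, and the Dedekind $\eta$ multiplier), and verifying that the divergent real parts cancel exactly requires the full Jacobi imaginary transformation rather than its leading-order simplification. Establishing uniformity of the $O(\delta^2)$ remainder on compact subsets of $s\in\mathbb C$ is a further check, since the deformed Airy contour passes through a region where the original integrand is highly oscillatory; this is standard once the saddle is isolated but is tedious in detail.
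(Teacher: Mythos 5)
First, a point of calibration: the paper does not prove this proposition at all. It is quoted, with attribution, from Hamamoto--Kajiwara--Witte \cite{hama} and is used only as motivation for regarding $\ai_q$ as a $q$-analogue of the Airy function. So there is no in-paper argument to compare yours against; what you have written is an independent attempt at the theorem of \cite{hama}, and it has to be judged on its own.

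As such an attempt, the Mellin--Barnes plus modular-transformation plus steepest-descent program is a sensible and standard strategy for $q\to 1$ Airy-type asymptotics, but what you have written is a plan rather than a proof, and several of its concrete assertions do not survive inspection. (i) The contour representation has a sign defect: the residue of $\pi/\sin(\pi n)$ at $n=k$ is $(-1)^k$, so your integrand reproduces $\sum_k(-1)^k q^{k(k+1)/2}(\pm x)^k/\bigl((q,-q;q)_k\bigr)$ rather than the actual series $\sum_k q^{k(k+1)/2}(\pm x)^k/\bigl((q,-q;q)_k\bigr)$; you need $(\mp x)^n$ in the integrand, and then the choice of branch of $\log(\mp x)$ interacts with exactly the phases you are trying to produce. (ii) The quoted modular expansion of $(q;q)_\infty$ has the wrong sign on its logarithmic term: with $q=e^{-\epsilon}$ one has $\log(q;q)_\infty=-\pi^2/(6\epsilon)+\tfrac12\log(2\pi/\epsilon)+O(\epsilon)$, not $\tfrac12\log(\epsilon/2\pi)$. (iii) Most seriously, the entire content of the proposition beyond ``an Airy function appears'' lies in the prefactor $2\pi^{1/2}\delta^{-1/2}$ and the three phases, and these are only asserted to ``come from'' various pieces. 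A quick power count already fails as stated: $1/(q,-q;q)_\infty\sim\sqrt{\epsilon/(2\pi)}\cdot(\cdots)\sim c\,\delta^{3/2}$ combined with the Jacobian $dn=(2/\delta)\,du$ gives $\delta^{1/2}$, not $\delta^{-1/2}$, so the missing powers must be extracted from the $n$-dependent products $(q^{n+1},-q^{n+1};q)_\infty$ near the saddle --- a computation you never perform. You also do not justify the saddle scale $n=2u/\delta$ (the naive Gaussian factor $q^{n^2/2}=e^{-\epsilon n^2/2}$ would suggest $n\sim\epsilon^{-1/2}\sim\delta^{-3/2}$, so the actual localization needs an argument), nor the uniformity of the $O(\delta^2)$ remainder on compacta in $s$, which you yourself flag. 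Until these steps are carried out, the proposal cannot be accepted as a proof; for a complete argument one should consult \cite{hama}.
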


Here, $\ai (x)$ is the Airy function. From this proposition, we can regard the $q$-Airy function as a $q$-analogue of the Airy function.

We can easily check out that the $q$-Airy function satisfies the second order linear $q$-difference equation
\begin{equation}
\left(\sigma_q^2+x\sigma_q-1\right)u(x)=0.
\label{qai}
\end{equation}
Another solution of the equation \eqref{qai} is given by
\[u(x)=
e^{\pi i\left(\frac{\log x}{\log q}\right)}{_1\varphi_1}(0;-q;q,x)
=e^{\pi i\left(\frac{\log x}{\log q}\right)}\ai_q(-x).\]


\section{The $q$-Borel-Laplace transformations}
In this section, we show a connection formula of the divergent series $ {}_2\varphi_0$. This series appears in the second solution of the Ramanujan equation \eqref{ram}. At first, we define two types of the $q$-Borel-Laplace transformations.

\medskip
\noindent
\textbf{Definition.} \textit{We assume that $f(x)$ is a formal power series $f(x)=\sum_{n\ge 0}a_nx^n$, $a_0=1$.}
\begin{enumerate}
\item The $q$-Borel transformation of the first kind is
\[\left(\mathcal{B}_q^+f\right)(\xi ):=\sum_{n\ge 0}a_nq^{\frac{n(n-1)}{2}}\xi^n\left(=:\varphi (\xi )\right).\]
\item The $q$-Laplace transformation of the first kind is
\[\left(\mathcal{L}_{q, \lambda}^+\varphi\right)(x):=
\frac{1}{1-q}\int_0^{\lambda\infty}\frac{\varphi (\xi )}{\theta_q\left(\frac{\xi}{x}\right)}\frac{d_q\xi}{\xi}=\sum_{n\in\mathbb{Z}}\frac{\varphi (\lambda q^n)}{\theta_q\left(\frac{\lambda q^n}{x}\right)},\]
here, this transformation is given by Jackson's $q$-integral \cite{GR}.
\end{enumerate}

We also define the $q$-Borel-Laplace transformations of the second kind as follows:
\begin{enumerate}
\item The $q$-Borel transformation of the second kind is 
\[(\mathcal{B}_q^-f)(\xi ):=\sum_{n\ge 0}a_nq^{-\frac{n(n-1)}{2}}\xi^n\left(=:g(\xi )\right).\]
\item The $q$-Laplace transformation of the second kind is
\[\left(\mathcal{L}_q^-g\right)(x):=\frac{1}{2\pi i}\int_{|\xi |=r}g(\xi )\theta_q\left(\frac{x}{\xi }\right)\frac{d\xi}{\xi},\]
where $r>0$ is enough small number. 
\end{enumerate}

We remark that the $q$-Borel transformation $\mathcal{B}_q^+$ is formal inverse of the $q$-Laplace transformation $\mathcal{L}_{q,\lambda }^+$ as follows;
\begin{lmm}For any entire function $f(x)$, we have 
\[\mathcal{L}_{q, \lambda}^+\circ\mathcal{B}_q^+f=f.\]
\end{lmm}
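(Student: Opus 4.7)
The plan is to prove the identity by reducing to monomials and exploiting the functional equation of the theta function. Since both $\mathcal{B}_q^+$ and $\mathcal{L}_{q,\lambda}^+$ are linear (the latter because the series in its definition converges uniformly on compacta away from the $q$-spiral $[\lambda;q]$), it suffices to verify $\mathcal{L}_{q,\lambda}^+\circ\mathcal{B}_q^+(x^k)=x^k$ for every integer $k\ge 0$, and then justify term-by-term summation when $f$ is a general entire function.

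For the monomial step, $\mathcal{B}_q^+$ sends $x^k$ to $q^{k(k-1)/2}\xi^k$, so the left-hand side becomes
\[q^{k(k-1)/2}\sum_{n\in\mathbb{Z}}\frac{(\lambda q^n)^k}{\theta_q(\lambda q^n/x)}.\]
Using the functional equation $\theta_q(q^n z)=q^{-n(n-1)/2}z^{-n}\theta_q(z)$ with $z=\lambda/x$, the $n$-th denominator becomes $q^{-n(n-1)/2}(\lambda/x)^{-n}\theta_q(\lambda/x)$, so each summand simplifies to $\lambda^k q^{k(k-1)/2+nk+n(n-1)/2}(\lambda/x)^n/\theta_q(\lambda/x)$. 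The elementary identity
\[\frac{k(k-1)}{2}+nk+\frac{n(n-1)}{2}=\frac{(n+k)(n+k-1)}{2},\]
together with the change of summation index $m=n+k$, transforms the sum into
\[\frac{x^k}{\theta_q(\lambda/x)}\sum_{m\in\mathbb{Z}}q^{m(m-1)/2}(\lambda/x)^m=\frac{x^k}{\theta_q(\lambda/x)}\cdot\theta_q(\lambda/x)=x^k,\]
exactly as desired.

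For general entire $f(x)=\sum_{n\ge 0}a_nx^n$, the bound $|a_n|\le C\varepsilon^n$ valid for every $\varepsilon>0$ ensures that $\varphi=\mathcal{B}_q^+f$ is itself entire and that the double series
\[\sum_{n\in\mathbb{Z}}\sum_{k\ge 0}\frac{a_k q^{k(k-1)/2}(\lambda q^n)^k}{\theta_q(\lambda q^n/x)}\]
converges absolutely on every compact subset of $\mathbb{C}^*\setminus[\lambda;q]$, since the factor $1/\theta_q(\lambda q^n/x)$ decays like $q^{n(n-1)/2}$ as $|n|\to\infty$ by the same functional equation, while $a_k q^{k(k-1)/2}$ decays super-exponentially in $k$. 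Fubini's theorem then legitimates the interchange of the two sums and reduces the statement to the monomial case proved above.

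The main obstacle is the bookkeeping of the three $q$-exponents in the monomial reduction and, at a more technical level, the justification of Fubini: one must verify that the decay of $a_k q^{k(k-1)/2}$ in $k$ dominates the growth in $n$ of $(\lambda q^n)^k/\theta_q(\lambda q^n/x)$ jointly on a fixed compactum in $x$. Both issues dissolve once the theta functional equation has been used to express $\theta_q(\lambda q^n/x)$ in closed form in terms of $\theta_q(\lambda/x)$, so the essential content of the lemma is the triangular-number identity above, which is precisely what makes $\mathcal{L}_{q,\lambda}^+$ a left inverse of $\mathcal{B}_q^+$.
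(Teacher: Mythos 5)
Your proof is correct, and it is the standard argument: reduce to monomials, use the functional equation $\theta_q(q^nz)=q^{-n(n-1)/2}z^{-n}\theta_q(z)$, and observe the triangular-number identity $\tfrac{k(k-1)}{2}+nk+\tfrac{n(n-1)}{2}=\tfrac{(n+k)(n+k-1)}{2}$; the Fubini justification via $\sum_k|a_k||x|^k<\infty$ is also sound. The paper states this lemma without any proof at all, so you have supplied what the author omits. One small correction: the zeros of $\theta_q$ lie on $-q^{\mathbb{Z}}$, so the denominators $\theta_q(\lambda q^n/x)$ vanish precisely when $x\in[-\lambda;q]$, and that is the spiral you must excise (consistent with the paper's later statements), not $[\lambda;q]$ as written in your argument.
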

The $q$-Borel transformation $\mathcal{B}_q^-$ also can be considered as a formal inverse of the $q$-Laplace transformation.
\begin{lmm} We assume that the function $f$ can be $q$-Borel transformed to the analytic function $g(\xi )$ around $\xi =0$. Then, we have
\[\mathcal{L}^-_q\circ\mathcal{B}^-_qf=f.\]
\end{lmm}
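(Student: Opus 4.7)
The plan is to unwind both transforms against the series expansion of $f$ and then extract a residue. Writing $f(x)=\sum_{n\ge 0}a_nx^n$, the hypothesis gives an analytic function
\[g(\xi)=(\mathcal{B}_q^-f)(\xi)=\sum_{n\ge 0}a_n q^{-n(n-1)/2}\xi^n\]
on some disc $|\xi|\le r$. On the contour of integration I would substitute the Laurent expansion of the theta function,
\[\theta_q\!\left(\frac{x}{\xi}\right)=\sum_{m\in\mathbb{Z}}q^{m(m-1)/2}\left(\frac{x}{\xi}\right)^m,\]
which converges uniformly on $|\xi|=r$ for every $x\in\mathbb{C}^*$ because $|q^{m(m-1)/2}|$ decays like $|q|^{m^2/2}$ as $|m|\to\infty$.

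Then I would form the product $g(\xi)\theta_q(x/\xi)/\xi$, exchange the (double) sum with the integral, and evaluate term by term. A monomial $\xi^{n-m-1}$ has vanishing integral on $|\xi|=r$ unless $m=n$, so only the diagonal $m=n$ survives. On that diagonal the factors $q^{-n(n-1)/2}$ and $q^{n(n-1)/2}$ cancel exactly, leaving
\[(\mathcal{L}_q^-g)(x)=\sum_{n\ge 0}a_n x^n=f(x),\]
as required.

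The only non-formal point is the interchange of summation and integration. The series defining $g$ converges uniformly on $|\xi|=r$ by the analyticity assumption, and the $\theta_q$ series converges uniformly there by the rapid Gaussian decay of its coefficients; taking the Cauchy product of two uniformly and absolutely convergent series justifies the exchange. This is where the hypothesis ``$g$ analytic near $\xi=0$'' is used in an essential way, and is the main (and really the only) obstacle — the residue bookkeeping itself is mechanical once the interchange is legitimate.
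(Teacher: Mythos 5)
Your argument is correct and is exactly the residue computation the paper itself invokes (its proof is the one line ``calculating residues of the $q$-Laplace transformation around the origin''): extracting the coefficient of $\xi^{-1}$ in $g(\xi)\theta_q(x/\xi)/\xi$ kills everything off the diagonal $m=n$, where the factors $q^{\mp n(n-1)/2}$ cancel. Your justification of the sum--integral interchange via uniform absolute convergence of both series on $|\xi|=r$ is the right (and only) analytic point, so the proposal matches the paper's proof in substance while supplying the details it omits.
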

\begin{proof}We can prove this lemma calculating residues of the $q$-Laplace transformation around the origin. 
\end{proof}

The $q$-Borel transformation $\mathcal{B}_q^-$ has following operational relation.
\begin{lmm}\label{orqb}
For any $l,m\in\mathbb{Z}_{\ge 0}$,
\[\mathcal{B}_q^-(t^m\sigma_q^l)=q^{-\frac{m(m-1)}{2}}\tau^m\sigma_q^{l-m}\mathcal{B}_q^-.\]
\end{lmm}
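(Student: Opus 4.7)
The identity is a direct coefficient-level verification from the definition of $\mathcal{B}_q^-$, where $\tau$ is interpreted as multiplication by $\xi$ in the Borel plane (the analogue on the $\xi$-side of the operator $t$ acting by multiplication on the $x$-side). Writing the input as $f(t)=\sum_{n\ge 0}a_nt^n$, I first evaluate the action of $t^m\sigma_q^l$ termwise:
\[
(t^m\sigma_q^lf)(t)=\sum_{n\ge 0}a_n\,q^{ln}\,t^{n+m}.
\]
Applying $\mathcal{B}_q^-$ to this power series, again termwise, replaces $t^{n+m}$ by $q^{-(n+m)(n+m-1)/2}\xi^{n+m}$, producing
\[
\mathcal{B}_q^-(t^m\sigma_q^lf)(\xi)=\sum_{n\ge 0}a_n\,q^{ln}\,q^{-\frac{(n+m)(n+m-1)}{2}}\,\xi^{n+m}.
\]

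The key algebraic step is the Pascal-type identity
\[
\frac{(n+m)(n+m-1)}{2}=\frac{n(n-1)}{2}+\frac{m(m-1)}{2}+nm,
\]
which lets the $q$-exponent split cleanly. Factoring out $q^{-m(m-1)/2}\xi^m$ and collapsing $q^{ln}q^{-nm}=q^{(l-m)n}$ inside the sum gives
\[
q^{-\frac{m(m-1)}{2}}\,\xi^m\sum_{n\ge 0}a_n\,q^{-\frac{n(n-1)}{2}}(q^{l-m}\xi)^n
=q^{-\frac{m(m-1)}{2}}\,\xi^m\,(\mathcal{B}_q^-f)(q^{l-m}\xi),
\]
which is precisely $q^{-m(m-1)/2}\tau^m\sigma_q^{l-m}\mathcal{B}_q^-f(\xi)$. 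Since this holds for arbitrary $f$, the operator identity in the statement follows.

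There is no genuine obstacle: the verification is purely formal, so no convergence considerations enter (the identity lives at the level of operators on $\mathbb{C}[[t]]$, and $\sigma_q^{-1}$, i.e.\ the shift $f(\xi)\mapsto f(q^{-1}\xi)$, makes sense formally even when $l<m$). The only point demanding attention is the order of the factors on the right: $\sigma_q^{l-m}$ acts after $\mathcal{B}_q^-$ but before the multiplication $\tau^m$, and the computation above produces exactly that composition. Keeping track of the split of the quadratic exponent via the Pascal identity is the entire content of the lemma.
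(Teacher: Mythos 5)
Your computation is correct, and it is the standard direct verification: expand $f$, act by $t^m\sigma_q^l$, apply $\mathcal{B}_q^-$ termwise, and split the quadratic exponent via $\frac{(n+m)(n+m-1)}{2}=\frac{n(n-1)}{2}+\frac{m(m-1)}{2}+nm$. The paper states Lemma \ref{orqb} without any proof at all, so there is nothing to compare against; your argument supplies exactly the routine coefficient check the author leaves implicit, and your reading of $\tau$ as multiplication by the Borel variable is consistent with how the lemma is used later (e.g.\ in deriving $g(q\tau)=(1-q^4\tau^2)g(\tau)$ from the equation with $K=-q^5$).
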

In the following subsection, we apply these resummation methods to deal with the connection problem.

\section{The connection formula of the series ${}_2\varphi_0(0,0;-;q,\cdot )$}
The aim of this section is to give a proof for the following theorem;
\begin{thm}\label{divergentramanujan}For any $x\in\mathbb{C}^*\setminus [-\lambda ;q],$ 
\begin{align*}\theta_q(x){}_2f_0&\left(0,0;-;q,-\frac{x}{q}\right)=(q;q)_\infty\frac{\theta_q(x) \theta_{q^2}\left(-\frac{\lambda^2}{qx}\right)}{\theta_q\left(-\frac{\lambda}{q}\right)\theta_q\left(\frac{\lambda}{x}\right)}
{}_1\varphi_1\left(0;q;q^2,\frac{q^2}{x}\right)\\
&+\frac{(q;q)_\infty}{1-q}\frac{\theta_q(x) \theta_{q^2}\left(-\frac{\lambda^2}{x}\right)}{\theta_q\left(-\frac{\lambda}{q}\right)\theta_q\left(\frac{\lambda}{x}\right)}\frac{\lambda}{x}
{}_1\varphi_1\left(0;q^3;q^2,\frac{q^3}{x}\right).
\end{align*} 
\end{thm}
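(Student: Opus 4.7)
The plan is to carry out the $q$-Borel--Laplace resummation $\mathcal{L}_{q,\lambda}^+\circ\mathcal{B}_q^+$ of the first kind on the series ${}_2\varphi_0(0,0;-;q,-x/q)$, compute the Laplace transform as an explicit Jackson sum of theta functions, and split the result by the parity of the summation index to recover the two ${}_1\varphi_1$ solutions at infinity.

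First I would perform the $q$-Borel transform at the level of coefficients. Writing
\[
{}_2\varphi_0(0,0;-;q,-x/q)=\sum_{n\ge 0}\frac{q^{-n(n+1)/2}}{(q;q)_n}x^n,
\]
the factor $q^{n(n-1)/2}$ in the definition of $\mathcal{B}_q^+$ collapses the exponent of $q$ from $-n(n+1)/2$ to $-n$, so
\[
(\mathcal{B}_q^+\,{}_2\varphi_0(0,0;-;q,-x/q))(\xi)=\sum_{n\ge 0}\frac{(\xi/q)^n}{(q;q)_n}=e_q(\xi/q).
\]
By Theorem~\ref{eqEq} applied at the point $\xi/q$, this $q$-exponential admits the meromorphic representation
\[
e_q(\xi/q)=\frac{(q;q)_\infty}{\theta_q(-\xi/q)}E_q(-q^2/\xi)=\frac{(q;q)_\infty}{\theta_q(-\xi/q)}\sum_{k\ge 0}\frac{(-1)^k q^{k(k+3)/2}}{(q;q)_k}\xi^{-k},
\]
which is the form suitable for the $q$-Laplace transform.

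The core computational step is to evaluate, via the Jackson-sum form $\mathcal{L}_{q,\lambda}^+\varphi(x)=\sum_{n\in\mathbb{Z}}\varphi(\lambda q^n)/\theta_q(\lambda q^n/x)$, the building block
\[
\mathcal{L}_{q,\lambda}^+\!\Bigl[\frac{\xi^{-k}}{\theta_q(-\xi/q)}\Bigr](x)=\sum_{n\in\mathbb{Z}}\frac{(\lambda q^n)^{-k}}{\theta_q(-\lambda q^{n-1})\,\theta_q(\lambda q^n/x)}.
\]
Applying the quasi-periodicity $\theta_q(q^m y)=q^{-m(m-1)/2}y^{-m}\theta_q(y)$ to strip the $q^n$ dependence from both theta factors and shifting $m=n-1$, the remaining sum has the shape $\sum_m q^{m^2}z^m=\theta_{q^2}(qz)$, giving
\[
\mathcal{L}_{q,\lambda}^+\!\Bigl[\frac{\xi^{-k}}{\theta_q(-\xi/q)}\Bigr](x)=\frac{\lambda^{1-k}q^{-k}\,\theta_{q^2}(-\lambda^2 q^{1-k}/x)}{x\,\theta_q(-\lambda)\,\theta_q(\lambda/x)}.
\]
Substituting back and summing over $k\ge 0$, the only $k$-dependence inside a theta factor is the $q^{1-k}$ in its argument, so the series splits naturally by parity. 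Writing $k=2j$ versus $k=2j+1$ and invoking $\theta_{q^2}(q^{-2j}y)=q^{-j(j+1)}y^j\theta_{q^2}(y)$ together with $(q;q)_{2j}=(q,q^2;q^2)_j$ and $(q;q)_{2j+1}=(1-q)(q^3,q^2;q^2)_j$, the even part collapses to $\theta_{q^2}(-\lambda^2 q/x)\,{}_1\varphi_1(0;q;q^2,q^2/x)$ and the odd part to $-\frac{q}{\lambda(1-q)}\theta_{q^2}(-\lambda^2/x)\,{}_1\varphi_1(0;q^3;q^2,q^3/x)$, both multiplied by the common prefactor $(q;q)_\infty\lambda/(x\,\theta_q(-\lambda)\,\theta_q(\lambda/x))$. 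Multiplying by $\theta_q(x)$ and converting $\theta_q(-\lambda)=-(q/\lambda)\theta_q(-\lambda/q)$ and $\theta_{q^2}(-\lambda^2 q/x)=-(qx/\lambda^2)\theta_{q^2}(-\lambda^2/(qx))$ puts the coefficients into exactly the form stated.

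The main obstacle is the bookkeeping in the parity split: one must track several exponents of $q$, $\lambda$ and $x$ through repeated applications of theta quasi-periodicity and verify that the inner sums reassemble into precisely the two ${}_1\varphi_1$ series at infinity, with the right normalization. No new analytic ingredient is required beyond Theorem~\ref{eqEq}, the Jackson definition of $\mathcal{L}_{q,\lambda}^+$, and the elementary identities for $\theta_q$ and $\theta_{q^2}$.
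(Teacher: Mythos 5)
Your proposal is correct and follows essentially the same route as the paper's proof: apply $\mathcal{B}_q^+$ to get $e_q(\xi/q)$, use the $e_q$--$E_q$ connection formula of Theorem \ref{eqEq}, evaluate the Jackson-sum $q$-Laplace transform via theta quasi-periodicity to produce $\theta_{q^2}$ factors, and split by parity to recover the two ${}_1\varphi_1$ series (your building-block formula and the final coefficients check out). The only difference is organizational: the paper performs the even/odd split first, at the level of $e_q(\xi/q)$ via Lemma \ref{alt}, and then Laplace-transforms each piece, whereas you Laplace-transform termwise in the $E_q$ expansion and split by parity afterwards.
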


We give the proof of theorem \ref{divergentramanujan}.
\begin{proof}
We apply the $q$-Borel transformation $\mathcal{B}_q^+$ to the divergent 
series $v(x)={}_2\varphi_0(0,0;-;q,-x/q)$. We obtain 
\[\left(\mathcal{B}_q^+v\right)(\xi )={}_1\varphi_0\left(0;-;q,\frac{\xi}{q}\right)=:\varphi (\xi ).\]
From lemma \ref{alt}, 
\[\varphi (\xi )=\frac{(q;q)_\infty}{\theta_q\left(-\frac{\xi}{q}\right)}
{}_0\varphi_1\left(-;q;q^2,\frac{q^5}{\xi^2}\right)-\frac{(q;q)_\infty}{\theta_q\left(-\frac{\xi}{q}\right)}
\frac{q^2}{(1-q)\xi}{}_0\varphi_1\left(-;q^3;q^2,\frac{q^7}{\xi^2}\right)\]
where $|\xi /q|<1$. 

We apply the $q$-Laplace transformation $\mathcal{L}_{q, \lambda}^+$ to $\varphi (\xi )$:
\begin{align*}
&\left(\mathcal{L}_{q,  \lambda}^+\varphi\right)(x)=\sum_{n\in\mathbb{Z}}\frac{\varphi (\lambda q^n)}{\theta_q\left(\frac{\lambda q^n}{x}\right)}
=\sum_{n\in\mathbb{Z}}\frac{{}_1\varphi_0\left(0;-;q,\frac{\lambda q^n}{q}\right)}{\theta_q\left(\frac{\lambda q^n}{x}\right)}\\
&=\frac{(q;q)_\infty}{\theta_q\left(-\frac{\lambda}{q}\right)\theta_q\left(\frac{\lambda}{x}\right)}\sum_{n-m\in\mathbb{Z}}(q^2)^{\frac{(n-m)(n-m-1)}{2}}\left(-\frac{\lambda^2}{qx}\right)^{n-m}\\
&\qquad\qquad\qquad\qquad \times \sum_{m\ge 0}\frac{(-1)^m(q^2)^{\frac{m(m-1)}{2}}}{(q;q^2;q^2)_m}\left(\frac{q^2}{x}\right)^m\\
&-\frac{(q;q)_\infty}{\theta_q\left(-\frac{\lambda}{q}\right)\theta_q\left(\frac{\lambda}{x}\right)}\frac{q^2}{(1-q)\lambda}\sum_{n-m\in\mathbb{Z}}(q^2)^{\frac{(n-m)(n-m-1)}{2}}\left(-\frac{\lambda^2}{q^2x}\right)^{n-m}\\
&\qquad\qquad\qquad\qquad \times
\sum_{m\ge 0}\frac{(-1)^m(q^2)^{\frac{m(m-1)}{2}}}{(q^3,q^2;q^2)_m}\left(\frac{q^3}{x}\right)^m.
\end{align*}
Therefore, 
\begin{align*}
&{}_2f_0\left(0,0;-;q,-\frac{x}{q}\right)
=\mathcal{L}_{q, \lambda}^+\circ\mathcal{B}_q^+{}_2\varphi_0\left(0,0;-;q,-\frac{x}{q}\right)\\
&=(q;q)_\infty\frac{\theta_{q^2}\left(-\frac{\lambda^2}{qx}\right)}{\theta_q\left(-\frac{\lambda}{q}\right)\theta_q\left(\frac{\lambda}{x}\right)}{}_1\varphi_1\left(0;q;q^2,\frac{q^2}{x}\right)
+\frac{(q;q)_\infty}{1-q}\frac{\theta_{q^2}\left(-\frac{\lambda^2}{x}\right)}{\theta_q\left(-\frac{\lambda}{q}\right)\theta_q\left(\frac{\lambda}{x}\right)}
{}_1\varphi_1\left(0;q^3;q^2,\frac{q^3}{x}\right).
\end{align*}
We obtain the conclusion.
\end{proof}

\begin{rem}By theorem \ref{divergentramanujan}, we have
\[\tilde{u}_2(x,\lambda )=\tilde{C}_{21}v_1(x)+\tilde{C}_{22}v_2(x),\]
where
\[\tilde{C}_{21}(x)=\frac{(q;q)_\infty\theta_{q^2}(-qx/\lambda^2)\theta_{q^2}(x)}{\theta_q(-q/\lambda )\theta_q(x/\lambda )\theta_q(x)}\]
and
\[\tilde{C}_{22}(x)=\frac{(q;q)_\infty\theta_{q^2}(-x/\lambda^2)\theta_{q^2}(x/q)}{\theta_q(-1/\lambda )\theta_q(x/\lambda )\theta_q(x/q)}.\]
This is a half of our connection formula.
\end{rem}

\section{The $q$-Airy equation around the infinity and the connection formula}
We consider the behavior of the equation \eqref{qai} around the infinity. We set $x=1/t$ and $z(t)=u(1/t)$. Then $z(t)$ satisfies 
\[\left(-\sigma_q^2+\frac{1}{q^2t}\sigma_q+1\right)z(t)=0.\]
We set $\mathcal{E}(t)=1/\theta (-q^2t)$ and $f(t)=\sum_{n\ge 0}a_nt^n,\quad a_0=1$. We assume that $z(t)$ can be described as 
\[z(t)=\mathcal{E}(t)f(t)=\frac{1}{\theta (-q^2t)}\left(\sum_{n\ge 0}a_nt^n\right).\]

The function $\mathcal{E}(t)$ has the following property;
\begin{lmm} For any $t\in\mathbb{C}^*$,
\[\sigma_q\mathcal{E}(t)=-q^2t\mathcal{E}(t),
\qquad \sigma_q^2\mathcal{E}(t)=q^5t^2\mathcal{E}(t).\]
\end{lmm}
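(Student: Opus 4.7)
The plan is to reduce both identities to the functional equation of the Jacobi theta function recorded in Section~2, namely $\theta_q(q^k y) = q^{-k(k-1)/2} y^{-k} \theta_q(y)$ (applied at the point $y = -q^2 t$), and then take reciprocals. Because $\mathcal{E}(t) = 1/\theta_q(-q^2 t)$, every $q$-shift of $\mathcal{E}$ inherits exactly the reciprocal of a $q$-shift of $\theta_q$ at an argument that is just a $q$-power rescaling of $-q^2 t$, so no analytic input beyond the functional equation is needed.

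For the first identity I would compute
\[
\sigma_q \theta_q(-q^2 t) \;=\; \theta_q\bigl(q(-q^2 t)\bigr) \;=\; (-q^2 t)^{-1}\,\theta_q(-q^2 t),
\]
using the $k=1$ case of the functional equation with $y = -q^2 t$. Taking reciprocals gives $\sigma_q \mathcal{E}(t) = -q^2 t\,\mathcal{E}(t)$, which is the first claim.

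For the second identity I would either iterate $\sigma_q$ or apply the functional equation directly with $k=2$. The direct route gives
\[
\sigma_q^2 \theta_q(-q^2 t) \;=\; \theta_q\bigl(q^2(-q^2 t)\bigr) \;=\; q^{-1}(-q^2 t)^{-2}\,\theta_q(-q^2 t),
\]
so after taking reciprocals $\sigma_q^2 \mathcal{E}(t) = q\,(-q^2 t)^2\,\mathcal{E}(t) = q^5 t^2\,\mathcal{E}(t)$. Equivalently, one can just apply $\sigma_q$ to the relation $\sigma_q \mathcal{E}(t) = -q^2 t\,\mathcal{E}(t)$, which yields $\sigma_q^2 \mathcal{E}(t) = \sigma_q(-q^2 t)\cdot \sigma_q \mathcal{E}(t) = (-q^3 t)(-q^2 t)\mathcal{E}(t) = q^5 t^2 \mathcal{E}(t)$; this cross-check is a useful sanity test.

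There is essentially no obstacle here: the lemma is a mechanical consequence of the quasi-periodicity of $\theta_q$. The only care needed is in bookkeeping of the sign from the argument $-q^2 t$ and the power of $q$ coming from the $q^{-k(k-1)/2}$ factor (which contributes nothing for $k=1$ but contributes the extra $q$ that combines with $(-q^2 t)^2$ to produce $q^5 t^2$ for $k=2$).
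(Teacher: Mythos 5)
Your proof is correct: both identities follow exactly as you say from the quasi-periodicity $\theta_q(q^k y)=q^{-k(k-1)/2}y^{-k}\theta_q(y)$ applied at $y=-q^2t$ (the paper states this lemma without proof, and this is clearly the intended argument, relying only on the functional equation recorded in Section~2). Your bookkeeping of the sign and of the extra factor $q$ in the $k=2$ case, as well as the iteration cross-check, are all accurate.
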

From this lemma, $f(t)$ satisfies the following equation
\begin{equation}\label{qaeq}
\left(-q^5t^2\sigma_q^2-\sigma_q+1\right)f(t)=0.
\end{equation}
Since \eqref{qaeq} is the same as \eqref{kram} for $K=-q^5$, we obtain
\[f(t)={}_0\varphi_1(-;0;q^2,q^5t^2)=\ram_{q^2}(-q^3t^2).\]

We show a connection formula for $f(t)$. In order to obtain a connection formula, we need the $q$-Borel transformation and the $q$-Laplace transformation following Zhang \cite{Z1}.
Applying the $q$-Borel transformation to the equation \eqref{kram} and using lemma \ref{orqb}, we obtain the first order $q$-difference equation
\[g(q\tau )=(1+q^2\tau )(1-q^2\tau )g(\tau ).\]
Since $g(0)=1$, $g(\tau )$ is given by an infinite product
\[g(\tau )=\frac{1}{(-q^2\tau ;q)_\infty(q^2\tau ;q)_\infty}\]
which has single poles at
\[\left\{\tau ;\tau =\pm q^{-2-k},\quad \forall k\in\mathbb{Z}_{\ge 0}\right\}.\]
By Cauchy's residue theorem, the $q$-Laplace transform of $g(\tau )$ is 
\begin{align*}
f(t)=&\frac{1}{2\pi i}\int_{|\tau |=r}g(\tau )\theta\left(\frac{t}{\tau}\right)\frac{d\tau}{\tau }\\
=&-\sum_{k\ge 0}\res\left\{g(\tau )\theta\left(\frac{t}{\tau}\right)\frac{1}{\tau };\tau =-q^{-2-k}\right\}\\
&-\sum_{k\ge 0}\res\left\{g(\tau )\theta\left(\frac{t}{\tau}\right)\frac{1}{\tau };\tau =q^{-2-k}\right\}
\end{align*}
where $0<r<r_0:=1/|q^2|$. We can culculate the residue from lemma \ref{lems}.
\begin{lmm}\label{lems}For any $k\in\mathbb{N}$, $\lambda\in\mathbb{C}^*$, we have:
\begin{enumerate}
\item $\res\left\{\dfrac{1}{\left(\tau /\lambda ;q\right)_\infty}\dfrac{1}{\tau}:\tau =\lambda q^{-k}\right\}
=\dfrac{(-1)^{k+1}q^{\frac{k(k+1)}{2}}}{(q;q)_k (q;q)_\infty}$,
\item $\dfrac{1}{(\lambda q^{-k};q)_\infty}=\dfrac{(-\lambda )^{-k}q^{\frac{k(k+1)}{2}}}{(\lambda ;q)_\infty \left(q/\lambda ;q\right)_k},\quad \lambda \not \in q^{\mathbb{Z}}$.
\end{enumerate}
\end{lmm}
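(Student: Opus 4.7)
Both parts are essentially manipulations of $q$-shifted factorials with negative-index entries; I would dispatch (ii) first as a purely algebraic identity and then use an analogous splitting to carry out the residue computation in (i).

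For (ii), the plan is to split the infinite product at the place where the exponent crosses zero. Writing
\[(\lambda q^{-k};q)_\infty=\prod_{j=0}^{\infty}(1-\lambda q^{j-k})=\Bigl(\prod_{j=0}^{k-1}(1-\lambda q^{j-k})\Bigr)\cdot(\lambda;q)_\infty,\]
the finite prefactor involves only negative powers $\lambda q^{-i}$ for $i=1,\dots,k$ (after the substitution $i=k-j$). The key step is the elementary reflection $1-\lambda q^{-i}=-\lambda q^{-i}(1-q^i/\lambda)$, which converts each negative-exponent factor into a positive-exponent factor times a scalar. Collecting the scalars yields $(-\lambda)^{k}q^{-k(k+1)/2}$ and the positive-exponent factors assemble to $(q/\lambda;q)_k$. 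Rearranging gives the claimed formula, and the hypothesis $\lambda\notin q^{\mathbb Z}$ is used only to ensure no factor of either side vanishes.

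For (i), the pole of $1/\bigl[\tau\,(\tau/\lambda;q)_\infty\bigr]$ at $\tau=\lambda q^{-k}$ is simple, coming from the single vanishing factor $1-\tau q^k/\lambda$. I would isolate it by writing $(\tau/\lambda;q)_\infty=(1-\tau q^k/\lambda)\,h(\tau)$ with $h(\tau)=\prod_{j\neq k}(1-\tau q^j/\lambda)$, then use
\[\lim_{\tau\to\lambda q^{-k}}\frac{\tau-\lambda q^{-k}}{1-\tau q^k/\lambda}=-\frac{\lambda}{q^{k}}\]
to reduce the residue to $-1/h(\lambda q^{-k})$, since the explicit $1/\tau$ factor cancels the $\lambda q^{-k}$ in the numerator. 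Splitting $h(\lambda q^{-k})$ into its $j<k$ and $j>k$ parts gives respectively $\prod_{i=1}^{k}(1-q^{-i})$ and $(q;q)_\infty$, and the finite product is handled by exactly the reflection move of part (ii) specialised to $\lambda=1$, producing $(-1)^{k}q^{-k(k+1)/2}(q;q)_k$.

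Assembling these pieces yields
\[\res_{\tau=\lambda q^{-k}}\frac{1}{\tau(\tau/\lambda;q)_\infty}=\frac{-1}{(-1)^{k}q^{-k(k+1)/2}(q;q)_k(q;q)_\infty}=\frac{(-1)^{k+1}q^{k(k+1)/2}}{(q;q)_k(q;q)_\infty},\]
which is the desired expression. The computation is entirely routine; there is no real obstacle. The only thing to watch is the sign bookkeeping in the reflection identity and the correct reindexing $i=k-j$ when rewriting the finite prefactor with positive exponents.
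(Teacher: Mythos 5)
Your proof is correct: both the reflection identity $1-\lambda q^{-i}=-\lambda q^{-i}(1-q^{i}/\lambda)$ for part (ii) and the simple-pole residue computation via $\lim_{\tau\to\lambda q^{-k}}(\tau-\lambda q^{-k})/(1-\tau q^{k}/\lambda)=-\lambda q^{-k}$ for part (i) check out, and all the sign and exponent bookkeeping is right. The paper states this lemma without proof (it is a standard computation of the type used by Zhang for $q$-Laplace transforms), and your argument is exactly the expected one, so there is nothing to compare against.
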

Summing up all of residues, we obtain 
\begin{align*}
f(t)&=\frac{\theta (q^2t)}{(q,-1;q)_\infty} {_1\varphi_1}\left(0,-q;q,\frac{1}{t}\right)
+\frac{\theta (-q^2 t)}{(q,-1;q)_\infty} {_1\varphi_1}\left(0,-q;q,-\frac{1}{t}\right).
\end{align*}
We obtain a connection formula for $z(t)=\mathcal{E}(t)f(t)$. Finally, we acquire the following connection formula between the Ramanujan function and the $q$-Airy function.
\begin{thm}
For any $x\in\mathbb{C}^*$,
\[\ram_{q^2}\left(-\frac{q^3}{x^2}\right)
=\frac{1}{(q,-1;q)_\infty}
\left\{\theta \left(\frac{x}{q}\right)\ai_q(-x)+\theta\left(-\frac{x}{q}\right)\ai_q(x)\right\}.\]
\end{thm}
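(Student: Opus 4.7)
The plan is to follow the $q$-Borel–Laplace resummation of the second kind set up in the excerpt, applied to $f(t)=\ram_{q^2}(-q^3 t^2)$. This function has already been identified as a solution of the transformed $q$-Airy equation $(-q^5 t^2 \sigma_q^2 - \sigma_q + 1)f(t)=0$ obtained after the substitution $x=1/t$ and the factoring $z(t)=\mathcal{E}(t)f(t)$. Resumming $f$ by $\mathcal{L}_q^-\circ\mathcal{B}_q^-$ will automatically express the Ramanujan function in the basis of local solutions at infinity of the original $q$-Airy equation, which are the two $\ai_q(\pm x)$.

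First I would apply $\mathcal{B}_q^-$ to the equation for $f$. By lemma \ref{orqb}, the factor $t^2\sigma_q^2$ converts to $q^{-1}\tau^2$, so the equation collapses to the first-order recursion $g(q\tau)=(1+q^2\tau)(1-q^2\tau)\,g(\tau)$ for $g:=\mathcal{B}_q^- f$. Iterating with $g(0)=1$ yields the closed form
\[ g(\tau)=\frac{1}{(-q^2\tau;q)_\infty (q^2\tau;q)_\infty}, \]
which is meromorphic with simple poles precisely at $\tau=\pm q^{-2-k}$, $k\ge 0$, as asserted in the excerpt.

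Next I would apply $\mathcal{L}_q^-$. Since $g$ is analytic at the origin, the inversion lemma gives $f=\mathcal{L}_q^-\circ\mathcal{B}_q^- f$. Evaluating the defining contour integral on $|\tau|=r$ with $r<1/|q|^2$ by Cauchy's residue theorem and pushing the contour outward produces the two residue sums displayed before lemma \ref{lems}. Each residue is then computed with both items of lemma \ref{lems}: item~(1) gives $\operatorname{Res}_{\tau=\lambda q^{-k}}[1/(\tau/\lambda;q)_\infty\cdot 1/\tau]$ with $\lambda=\pm q^{-2}$, while item~(2) converts the surviving factor $1/(\lambda q^{-k};q)_\infty$ into a $(q/\lambda;q)_k$ denominator. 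The theta factor $\theta(t/\tau)$ at $\tau=\pm q^{-2-k}$ is handled via the shift identity $\theta_q(q^m y)=q^{-m(m-1)/2}y^{-m}\theta_q(y)$, which pulls out a common $\theta(\pm q^2 t)$ and leaves a power of $\pm 1/t$ assembling the sum over $k$ into a ${}_1\varphi_1(0;-q;q,\pm 1/t)$. Summing both sets of residues and collecting the prefactors gives
\[ f(t)=\frac{\theta(q^2 t)}{(q,-1;q)_\infty}\,{}_1\varphi_1\!\left(0;-q;q,\tfrac{1}{t}\right)+\frac{\theta(-q^2 t)}{(q,-1;q)_\infty}\,{}_1\varphi_1\!\left(0;-q;q,-\tfrac{1}{t}\right). \]

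Finally, substituting $t=1/x$ returns the left-hand side to $\ram_{q^2}(-q^3/x^2)$. The identity ${}_1\varphi_1(0;-q;q,\pm x)=\ai_q(\mp x)$ follows directly from the definition of $\ai_q$, and the theta inversion $\theta_q(1/y)=\theta_q(y)/y$ combined with the shift identity gives $\theta_q(\pm q^2/x)=\theta_q(\pm x/q)$, producing exactly the claimed formula. The main obstacle is the bookkeeping in the residue calculation: carefully tracking the $q$-Pochhammer cancellations from lemma \ref{lems}(2) against the theta shifts at the poles, so that the two infinite sums reassemble cleanly into ${}_1\varphi_1$ series with the common normalization $1/(q,-1;q)_\infty$.
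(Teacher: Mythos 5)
Your proposal is correct and follows essentially the same route as the paper: factor out $\mathcal{E}(t)=1/\theta(-q^2t)$ from the $q$-Airy equation at infinity, identify $f(t)=\ram_{q^2}(-q^3t^2)$ via the covering-transformed Ramanujan equation, apply $\mathcal{B}_q^-$ to get $g(q\tau)=(1+q^2\tau)(1-q^2\tau)g(\tau)$ with the same infinite-product solution, and recover $f$ by $\mathcal{L}_q^-$ through the residue computation of lemma \ref{lems}, then rewrite in the variable $x=1/t$. The details you cite (the operational relation giving $q^{-1}\tau^2$, the poles at $\pm q^{-2-k}$, the theta shifts giving $\theta(\pm q^2/x)=\theta(\pm x/q)$, and ${}_1\varphi_1(0;-q;q,\pm x)=\ai_q(\mp x)$) all match the paper's argument.
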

Here, both $\ram_q(x)$ and $\ai_q(x)$ are defined by convergent series on whole of the complex plain. The connection formula above is valid for any $x\in\mathbb{C}^*$.

\section{The $q$-Borel-Laplace transformations of level $r-1$ and its application}
In this section, we define the higher order extension of the $q$-Borel-Laplace method. We also apply this method to the divergent series ${}_r\varphi_0(0,0,\dots ,0;-;q,x)$. At first, we give the definition of the $q$-Borel-Laplace transformations of level $r-1$ as follows:

\begin{dfn}The $q$-Borel transformation of level $r-1$ is
\[\left(\mathcal{B}_{q^{r-1}}^+f\right)(\xi ):=\sum_{n\ge 0}a_n(q^{r-1})^{\frac{n(n-1)}{2}}\xi^n=:\hat{\varphi}(\xi ),\]
where $f(x)$ is a formal power series. The $q$-Laplace transformation of level $r-1$ is
\[\left(\mathcal{L}_{q^{r-1}, \lambda}^+\hat{\varphi}\right)(x):=
\sum_{n\in\mathbb{Z}}\frac{\hat{\varphi} (\lambda q^{(r-1)n})}{\theta_{q^{r-1}}\left(\frac{\lambda q^{(r-1)n}}{x}\right)}.\]
\end{dfn}
We also have the following lemma.
\begin{lmm}For any entire function $f(x)$, we have
\[\mathcal{L}_{q^{r-1},\lambda}^+\circ\mathcal{B}_{q^{r-1}}^+f=f.\]
\end{lmm}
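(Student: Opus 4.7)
The plan is to compute $\mathcal{L}_{q^{r-1},\lambda}^+\circ\mathcal{B}_{q^{r-1}}^+ f$ directly from the definitions by term-by-term interchange of the two summations, and then collapse the resulting inner sum back to a theta function via the quasi-periodicity
\[\theta_{q^{r-1}}\bigl(q^{(r-1)n}y\bigr)=(q^{r-1})^{-n(n-1)/2}y^{-n}\theta_{q^{r-1}}(y)\]
recalled in section two (written with base $q^{r-1}$). Writing $f(x)=\sum_{k\ge 0}a_k x^k$ entire, the first step is to observe that $\hat\varphi(\xi)=\sum_k a_k (q^{r-1})^{k(k-1)/2}\xi^k$ is itself entire (entirety of $f$ gives super-exponential decay of $a_k$, and the extra Gaussian factor $(q^{r-1})^{k(k-1)/2}$ with $|q|<1$ only improves this), so every individual evaluation $\hat\varphi(\lambda q^{(r-1)n})$ makes sense.

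Next I would substitute the series for $\hat\varphi$ into the definition of $\mathcal{L}_{q^{r-1},\lambda}^+$ and exchange the order of summation, producing
\[\sum_{k\ge 0}a_k(q^{r-1})^{k(k-1)/2}\lambda^k\sum_{n\in\mathbb{Z}}\frac{q^{(r-1)nk}}{\theta_{q^{r-1}}\bigl(\lambda q^{(r-1)n}/x\bigr)}.\]
Applying the quasi-periodicity with $y=\lambda/x$ rewrites the denominator as $(q^{r-1})^{-n(n-1)/2}(\lambda/x)^{-n}\theta_{q^{r-1}}(\lambda/x)$, after which the inner sum becomes
\[\frac{1}{\theta_{q^{r-1}}(\lambda/x)}\sum_{n\in\mathbb{Z}}(q^{r-1})^{n(n-1)/2+nk}(\lambda/x)^n.\]
Using the algebraic identity $\tfrac{n(n-1)}{2}+nk=\tfrac{(n+k)(n+k-1)}{2}-\tfrac{k(k-1)}{2}$ and shifting the summation index $n\mapsto m=n+k$, the inner sum becomes exactly $\theta_{q^{r-1}}(\lambda/x)$ up to the factors $(q^{r-1})^{-k(k-1)/2}(x/\lambda)^k$. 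The theta in the numerator then cancels the one in the denominator, the Gaussian factors cancel the ones introduced by $\mathcal{B}_{q^{r-1}}^+$, and the factors of $\lambda$ cancel as well, leaving $\sum_{k\ge 0}a_k x^k=f(x)$.

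The only real obstacle is justifying the interchange of the two sums, since one of them runs over all of $\mathbb{Z}$ and involves the theta function in the denominator. This reduces to showing that the double series is absolutely convergent for $x\notin[-\lambda;q^{r-1}]$. For the tail $n\to+\infty$ the argument $\lambda q^{(r-1)n}/x$ tends to $0$, where $\theta_{q^{r-1}}$ tends to $1$ and $\hat\varphi$ stays bounded, so only the behavior $n\to-\infty$ requires care. There, the Jacobi triple product $\theta_{q^{r-1}}(z)=(q^{r-1},-z,-q^{r-1}/z;q^{r-1})_\infty$ gives a lower bound of order $|z|^{|n|}(q^{r-1})^{-n(n-1)/2}$ times a nonzero constant (as long as $x\notin[-\lambda;q^{r-1}]$), which exactly matches the Gaussian growth in the numerator produced by $(q^{r-1})^{k(k-1)/2}(\lambda q^{(r-1)n})^k$ and gives a convergent geometric tail. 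Once this Fubini step is in place the algebraic manipulation above carries through verbatim and yields the claim.
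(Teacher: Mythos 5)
The paper states this lemma without proof (as it does for the level-one analogue), so there is nothing to compare against directly; your direct computation --- $q$-Borel, substitute into the $q$-Laplace sum, interchange, apply the quasi-periodicity $\theta_{q^{r-1}}(q^{(r-1)n}y)=(q^{r-1})^{-n(n-1)/2}y^{-n}\theta_{q^{r-1}}(y)$, shift $n\mapsto n+k$ --- is the standard and correct way to verify it, and your algebra (including the identity $\tfrac{n(n-1)}{2}+nk=\tfrac{(n+k)(n+k-1)}{2}-\tfrac{k(k-1)}{2}$ and the final cancellations) checks out.

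There is, however, one genuinely false step in your convergence discussion. You claim that for $n\to+\infty$ the argument $\lambda q^{(r-1)n}/x$ tends to $0$, ``where $\theta_{q^{r-1}}$ tends to $1$ and $\hat\varphi$ stays bounded.'' In fact $\theta_{q^{r-1}}(z)\to\infty$ as $z\to 0$ (the factor $(-q^{r-1}/z;q^{r-1})_\infty$ in the triple product blows up; equivalently the $n=-1$ term $q^{r-1}z^{-1}$ of the Laurent series does). If your claim were true, the general term of the $n$-sum would tend to $a_0=1$ and the series would diverge, so the statement cannot stand as written. The correct justification is the same quasi-periodicity identity you already use: $1/\theta_{q^{r-1}}(\lambda q^{(r-1)n}/x)=(q^{r-1})^{n(n-1)/2}(\lambda/x)^n/\theta_{q^{r-1}}(\lambda/x)$ decays super-exponentially (Gaussian in $n$) as $n\to+\infty$, while $\hat\varphi(\lambda q^{(r-1)n})\to a_0$; this gives the $n\to+\infty$ tail at once, and a matching Gaussian estimate (using that $f$ entire forces $|a_k|^{1/k}\to 0$) handles $n\to-\infty$ as you indicate. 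With that repair, and the hypothesis $x\notin[-\lambda;q^{r-1}]$ guaranteeing $\theta_{q^{r-1}}(\lambda/x)\neq 0$, the Fubini step and hence the whole proof go through.
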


We study the application of this method. We consider the following divergent series
\[{}_r\varphi_0(0,0,\dots ,0;-;q,x)=\sum_{n\ge 0}\frac{1}{(q;q)_n}\left\{(-1)^nq^{\frac{n(n-1)}{2}}\right\}^{1-r}x^n.\]
We apply the $q$-Borel transformation of level $r-1$ to this series. Then, we obtain the $q$-exponential function $e_q(x)$. By corollariy \ref{cor33}, the $q$-Borel transform of ${}_r\varphi_0(0,0,\dots ,0;-;q,x)$ has the following representation:
\begin{align}&\left(\mathcal{B}_{q^{r-1}}^+{}_r\varphi_0(0,0,\dots ,0;-;q,*)\right)(\xi )=
e_q(\xi )\notag\\
&=\frac{(q;q)_\infty}{\theta_q(-\xi )}
\left[{}_0\varphi_{r-1}\left(-;q,q^2,\dots ,q^{r-1};q^r,(q^r)^{\frac{r-1}{2}}\left(\frac{q}{\xi}\right)^r\right)\right.\notag\\
&+\frac{(-1)}{1-q}\frac{q}{\xi}{}_0\varphi_{r-1}\left(-;q^2,\dots ,q^{r-1},q^{r+1};q^r,(q^r)^{\frac{r+1}{2}}\left(\frac{q}{\xi}\right)^r\right)\notag\\
&+\dots +\frac{(-1)^{r-1}q^{\frac{(r-1)(r-2)}{2}}}{(1-q)(1-q^2)\dots (1-q^{r-1})}\left(\frac{q}{\xi}\right)^{r-1}\notag\\
&\left.{}_0\varphi_{r-1}\left(-;q^{r+1},\dots ,q^{2(r-1)};q^r,(q^r)^{\frac{3(r-1)}{2}}\left(\frac{q}{\xi}\right)^r\right)
\right]=:\hat{\varphi}(\xi ).\notag
\end{align}
Moreover, we apply the $q$-Laplace transformation of level $r-1$ to $\hat{\varphi} (\xi)$, we obtain the following theorem.

\begin{thm}For any $\mathbb{C}^*\setminus [-\lambda ;q^{r-1}]$, we have
\begin{align}&\left(\mathcal{L}_{q^{r-1,\lambda}}^+\circ \mathcal{B}_{q^{r-1}}^+{}_r\varphi_0(0,0,\dots ,0;-;q,x)\right)\notag\\
&=\frac{1}{\theta_{q^{r-1}}\left(\frac{\lambda}{x}\right)}\frac{(q;q)_\infty}{\theta_q(-\lambda )}\theta_{q^{r(r-1)}\left((-1)^{r-1}q^{\frac{(r-1)(r-2)}{2}}\left(\frac{\lambda}{x}\right)^r\right)}\notag\\
&\times{}_{r-1}\varphi_{r-1}\left(0,0,\dots ,0;q,q^2,\dots ,q^{r-1};q^r,q^{\frac{(r-1)(r-2)}{2}}/x\right)\notag\\
&+\dots \notag\\
&+\frac{1}{\theta_{q^{r-1}}\left(\frac{\lambda}{x}\right)}\frac{(q;q)_\infty}{\theta_q(-\lambda )}\theta_{q^{r(r-1)}\left((-1)^{r-1}q^{\frac{3r(r-1)}{2}}\left(\frac{\lambda}{x}\right)^r\right)}\notag\\
&\times \frac{(-1)^{r-1}q^{\frac{(r-1)(r-2)}{2}}}{(1-q)(1-q^2)\dots (1-q^{r-1})}\left(\frac{q}{\lambda}\right)^{r-1}\notag\\
&\times{}_{r-1}\varphi_{r-1}\left(0,0,\dots ,0;q^r,q^{r+1},\dots ,q^{2r-1};q^r,q^{\frac{3r(r-1)}{2}}/x\right)\notag
\end{align}
\end{thm}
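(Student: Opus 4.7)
The plan is to follow the same three-step strategy as in Theorem \ref{divergentramanujan}, but with the level-$(r-1)$ transforms replacing the level-$1$ ones: first $\mathcal{B}_{q^{r-1}}^+$ is applied to ${}_r\varphi_0(0,\dots,0;-;q,x)$, then the resulting entire function is expanded via Corollary \ref{cor33}, and finally $\mathcal{L}_{q^{r-1},\lambda}^+$ is applied termwise. Step one is immediate from the definitions: the $n$-th coefficient of ${}_r\varphi_0$ carries $\{(-1)^n q^{n(n-1)/2}\}^{1-r}$, so multiplication by $(q^{r-1})^{n(n-1)/2}$ cancels the Gaussian in $n$, and one obtains $\hat{\varphi}(\xi)=e_q(\xi)$ as indicated just before the statement. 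Corollary \ref{cor33} then rewrites $\hat{\varphi}$ as a sum of $r$ blocks, the $i$-th block ($i=0,\dots,r-1$) having the shape
\[
\frac{(q;q)_\infty}{\theta_q(-\xi)}\cdot \frac{(-1)^i q^{i(i-1)/2}}{(1-q)(1-q^2)\cdots(1-q^i)}\Bigl(\frac{q}{\xi}\Bigr)^{\!i}\cdot {}_0\varphi_{r-1}\Bigl(-;\ldots;q^r,(q^r)^{\frac{2i+1}{2}}\Bigl(\frac{q}{\xi}\Bigr)^{\!r}\Bigr).
\]

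The substantive work sits in the third step. Applying $\mathcal{L}_{q^{r-1},\lambda}^+$ to a single block produces a double sum of the shape
\[
(q;q)_\infty \cdot (\text{prefactor})\cdot \sum_{n\in\mathbb{Z}}\frac{1}{\theta_{q^{r-1}}(\lambda q^{(r-1)n}/x)\,\theta_q(-\lambda q^{(r-1)n})}\sum_{m\ge 0}c_m\Bigl(\frac{q}{\lambda q^{(r-1)n}}\Bigr)^{\!rm+i},
\]
where the $c_m$ are the coefficients of the $i$-th ${}_0\varphi_{r-1}$ block. I would interchange the order of summation, use the quasi-periodicity of $\theta_q$ and $\theta_{q^{r-1}}$ to pull the $n$-dependence out of the theta denominators as explicit powers of $q^{(r-1)n}$ and $\lambda$ (up to the base values $\theta_q(-\lambda)$ and $\theta_{q^{r-1}}(\lambda/x)$), and then shift $n\mapsto n-m$ to decouple $m$ from $n$. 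After the shift, the inner sum over $n\in\mathbb{Z}$ is a Jacobi-type series in a single variable, which by the triple product identity collapses to a theta function $\theta_{q^{r(r-1)}}\bigl((-1)^{r-1}q^{\alpha_i}(\lambda/x)^r\bigr)$, while the $m$-sum becomes an honest ${}_{r-1}\varphi_{r-1}$ with argument $q^{\alpha_i}/x$. Summing the $r$ block contributions produces the right-hand side of the statement.

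The main obstacle is the exponent bookkeeping. Each of the $r$ blocks of Corollary \ref{cor33} contributes a different multiplicative constant $(q^r)^{(2i+1)/2}$ inside the ${}_0\varphi_{r-1}$ argument, and these constants percolate through the index shift $n\mapsto n-m$ to yield the distinct theta arguments $q^{(r-1)(r-2)/2},\dots,q^{3r(r-1)/2}$ and the corresponding ${}_{r-1}\varphi_{r-1}$ arguments visible in the statement. The overall sign $(-1)^{r-1}$ and the factorial prefactor $1/(1-q)(1-q^2)\cdots(1-q^{r-1})$ must be tracked carefully through both the Corollary \ref{cor33} expansion and the interchange of summation. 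Finally, the spiral condition $x\in\mathbb{C}^*\setminus[-\lambda;q^{r-1}]$ is exactly what is needed to keep $\theta_{q^{r-1}}(\lambda q^{(r-1)n}/x)\ne 0$ for every $n\in\mathbb{Z}$, and hence the Laplace kernel well-defined at every lattice point.
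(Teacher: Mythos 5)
Your proposal is correct and follows essentially the same route as the paper: the level-$(r-1)$ $q$-Borel transform of ${}_r\varphi_0$ is identified with $e_q(\xi)$, Corollary \ref{cor33} supplies the $r$-block expansion, and the level-$(r-1)$ $q$-Laplace transform is applied blockwise with the theta quasi-periodicity and $n\mapsto n-m$ shift handled exactly as in the proof of Theorem \ref{divergentramanujan}. If anything, your outline of the final summation step is more explicit than the paper, which states that step without detail.
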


\section*{Acknowledgement}
I am deeply grateful to Professor Yousuke Ohyama whose comments and suggestions were of inestimable value for my study. This study is partially supported by Fujikin Incorporated and Association for the Promotion of Production Technology.


\begin{thebibliography}{99}



\bibitem{Birkhoff}
G.~D.~Birkhoff, The generalized Riemann problem for linear differential
equations and the allied problems for linear difference and $q$-difference
equations, Proc. Am. Acad. Arts and Sciences, \textbf{49} ($1914$), $521-568$.



\bibitem{GR}
G.~Gasper and M.~Rahman, Basic Hypergeometric Series, 2nd ed,
 Cambridge, 2004.

\bibitem{Gauss13}
C.~F.~Gauss, Disquisitiones generales circa seriem infinitam \dots , \textit{Comm. soc. reg. sci. G\"{o}tt. rec. }, Vol. II; reprinted in \textit{Werke} \textbf{3} (1876), 123-162. 


\bibitem{hama}
T.~Hamamoto, K.~Kajiwara, N.~S.~Witte, 
Hypergeometric solutions to the $q$-Painlev\'e equation of type 
$(A_1+A_1')^{(1)}$, Int. Math. Res. Not. Vol. 2006, Article ID 84619, Pages $1-26$.

\bibitem{hein}
Heine.~E. Untersuchungen \"{u}ber die Reihe..., \textit{J. reine angew. Math.} \textbf{34}, 285-328.

\bibitem{Is}
M.~E.~H.~Ismail,
 Asymptotics of $q$-Orthogonal Polynomials
and a $q$-Airy Function, Int. Math. Res. Not. (2005), No. { 18} 1063--1088.


\bibitem{IZ}
 M.~E.~H.~Ismail and C.~Zhang, Zeros of entire functions and a problem of Ramanujan, Adv. Math. \textbf{209} (2007), 363--380




\bibitem{KMNOY}
K.~Kajiwara, T.~Masuda, M.~Noumi, Y.~Ohta, and Y.~Yamada, 
Hypergeometric solutions to the $q$-Painlev\'e equations, Int. Math. Res. Not.  (2004), no. 47, 2497--2521.




\bibitem{M0}
T.~Morita, 
A connection formula of the Hahn-Exton $q$-Bessel Function, SIGMA,\textbf{7} (2011), 115, 11pp. 

\bibitem{M1}
T.~Morita, 
A connection formula of the $q$-confluent hypergeometric function,
{\ttfamily arXiv:1105.5770}.



\bibitem{Ohyama}
Y.~Ohyama, 
A unified approach to $q$-special functions of the Laplace type,
{\ttfamily arXiv:1103.5232}.


\bibitem{Ramanujan}
S.~Ramanujan, The Lost Notebook and Other Unpublished Papers (with an introduction by G.~E.~Andrews), Narosa, New Delhi, 1988.

\bibitem{RSZ}
J.-P.~Ramis, J.~Sauloy and C.~Zhang, Local analytic classification of $q$-difference equations, {\ttfamily arXiv:0903.0853}, 2012; to appear in Ast\'erisque.

\bibitem{sauloy}
J.~Sauloy, 
Algebraic construction of the Stokes sheaf for irregular linear $q$-difference equations, {\ttfamily  arXiv:math/0409393}





\bibitem{W}
G.~N.~Watson,  The continuation of functions defined by generalized
hypergeometric series, {Trans. Camb. Phil. Soc.} {\bf 21} (1910),  281--299.


\bibitem{Z0}
C.~Zhang, Remarks on some basic hypergeometric series, 
in ``Theory and Applications of Special Functions", 
Springer (2005), 479--491. 

\bibitem{Z1}
C.~Zhang, Sur les fonctions $q$-Bessel de Jackson, 
  J. Approx. Theory, {\bf 122}  (2003),  208--223.  

\bibitem{Z2}
C.~Zhang, Une sommation discr\`e pour des \'equations aux $q$-diff\'erences
lin\'eaires et \`a  coefficients, analytiques: th\'eorie g\'en\'erale et exemples,
in ``Differential Equations and Stokes Phenomenon'', World Scientific (2002),
309--329.

\end{thebibliography}
\end{document}